\newtheorem{theorem}{Theorem}
\newtheorem{definition}[theorem]{Definition}
\newtheorem{proposition}[theorem]{Proposition}
\newtheorem{corollary}[theorem]{Corollary}
\newtheorem{lemma}[theorem]{Lemma}
\newtheorem{remark}[theorem]{Remark}
\newtheorem{assumption}{Assumption A\hspace{-5pt}}
\numberwithin{equation}{section}
\newcommand{\RR}{\mathbb{R}}
\newcommand{\Sph}{\mathbb{S}}
\newcommand{\eps}{\varepsilon}
\newcommand{\Id}{\mathrm{Id}}
\newcommand{\bP}{\mathbf{P}}
\newcommand{\bC}{\mathbf{C}}
\newcommand{\abs}[1]{\left|#1\right|}
\newcommand{\R}{{\mathbb {R}}}
\newcommand{\norm}[2]{\left\| #1 \right\|_{#2}}
\newcommand{\nablax}{\nabla_{x}}
\title{Critical mass phenomenon for a chemotaxis kinetic model with spherically symmetric initial data}
\author{Nikolaos Bournaveas\protect\footnote{University of Edinburgh, School of Mathematics,
JCMB, King's Buildings, Edinburgh EH9 3JZ, UK.
On sabbatical leave to the Universit\'e Pierre et Marie Curie, Laboratoire Jacques-Louis-Lions, 175 rue du Chevaleret, 75013 
Paris, France. E-mail address:
\texttt{n.bournaveas@ed.ac.uk}}, Vincent Calvez\protect\footnote{\'Ecole Normale Sup\'erieure, D\'epartement de Math\'ematiques 
et Applications, 
45 rue d'Ulm, 75005, Paris, France. E-mail address: \texttt{vincent.calvez@ens.fr}}}
\date{\today}
\begin{document}

\maketitle

\begin{abstract}
The goal of this paper is to exhibit a critical mass phenomenon occuring in a model for cell self-organization {\em via} 
chemotaxis. The very well known dichotomy arising in the behavior of the macroscopic Keller-Segel system is derived at 
the kinetic level, being closer to microscopic features. Indeed, under the assumption of spherical symmetry,
 we  prove that  solutions with initial data of large 
mass blow-up in finite time, whereas solutions with initial data of small mass do  not. Blow-up is the consequence of a virial identity 
and the existence part is derived from a comparison argument. Spherical symmetry is crucial within the two approaches. We also 
briefly investigate the drift-diffusion limit of such a kinetic model. We recover partially at the limit the Keller-Segel 
criterion for blow-up, thus arguing in favour of a global link between the two models.
\end{abstract}

\section{Introduction}
In this paper we aim to exhibit a blow-up {\em versus} global existence phenomenon for a kinetic model describing collective 
motion of cells in two dimensions of space. The so-called Othmer-Dunbar-Alt  system \cite{ODA88,ABC} reads as follows,
\begin{equation}\label{genkinmodel}
\left\{ \begin{array}{l}  \displaystyle \partial_{t} f + v\cdot\nabla_{x} f = \int_{v'\in V} T[S](t,x,v,v') f(t,x,v')\, dv' - 
\lambda[S](t,x,v) f(t,x,v) \,,  \quad x\in \RR^2  \ ,\quad t>0\ ,   \medskip \\
 \displaystyle - \Delta S +\alpha S = \rho(t,x) =\int_{v\in V} f(t,x,v)\, dv\ , 
\end{array} \right.
\end{equation}
where $f(t,x,v)$ denotes the cellular density in position$\times$velocity space, and $S(t,x)$ is the concentration of the 
chemoattractant. The velocity set is assumed to be bounded, and for simplicity we take 
 $V = \mathcal{B}(0,R)$ throughout this paper. 
The initial data $f_0$ belongs to $L^1(\RR^2\times V)$ (more appropriate assumptions on $f_0$ will be stated within Assumption 
A\ref{ass:f0}, Theorems \ref{thm:BU} and \ref{thm:existence}). Observe that the mass of cells is formally conserved in time:
\[\iint_{\RR^2\times V}  f(t,x,v)\, dvdx = \iint_{\RR^2\times V} f_0(x,v)\, dv dx \, . \]

The turning kernel $T[S](t,x,v,v')\geq0$ denotes the probability of transition between  velocities $v'\to v$ at position $x$ and 
time $t$, and $\lambda[S] = \int_{v'\in V} T[S](t,x,v',v)\, dv'$ is the intensity of this Poisson process. The influence of the 
chemical field $S$ is highlighted in the notation $T[S]$. We make the particular choice \begin{equation} 
\label{eq:turning kernel}
T[S](t,x,v,v') = \chi_0 \big(v\cdot \nabla S(t,x)\big)_+\, ,
\end{equation} 
with constant $\chi_0>0$, that is to say cells choose only favorable directions when they reorientate, and they align more likely 
with the gradient of the chemical. This mechanism is actually not well-suited for describing accurately bacterial motion like the 
'run and tumble' process performed by {\em E. coli} \cite{ErbanOthmer06}. However, this fits well with motion of bigger and more 
complex cells capable of sensing a space gradient of the chemical and to orientate accordingly (amoeboid or 
mesenchymal motion \cite{ErbanOthmer07}). More complex kinetic models involving saturation effects or interactions between cells 
and the surrounding tissue have been investigated respectively in \cite{ChalubRodriguez,HillenM5}.

\subsection{Statement of the main results (blow-up vs. global existence)}

The kinetic model under consideration in this work can be written more precisely as follows:
\begin{equation}\label{kinmodel}
\left\{ \begin{array}{l} \displaystyle\partial_{t} f + v\cdot\nabla_{x} f = \chi_0(v\cdot \nabla S)_+ \rho  -\chi_0  \omega 
|\nabla S| f\, , \quad v\in V = \mathcal B(0,R)\,,  \quad x\in \RR^2\ ,\quad t>0 \, ,    \medskip \\
 \displaystyle - \Delta S + \alpha S  = \rho(t,x) \, . 
\end{array} \right.\, 
\end{equation}
where $ \omega =  \int_{v'\in V} \left(v'\cdot \nabla S/|\nabla S|\right)_+ dv' = 2 R^3/3$ (see Lemma \ref{lem:averaging} below, 
the precise value of $\omega$ does not play an important role in the sequel).

\begin{assumption}[Initial datum]
\label{ass:f0}
Assume that the initial density $f_0(x,v)\geq 0$ belongs to $L^1\cap L^\infty(\RR^2\times V)$. Assume in addition that $f_0$ has 
spherical symmetry: for any rotation $\Theta$, $f_0(\Theta x, \Theta v) = f_0(x,v)$. We denote by $M$ the total mass: \[M = \iint_{\RR^2\times V} f_0(x,v)\, dvdx\, .\]

\end{assumption}

This assumption ensures that the system \eqref{kinmodel} has a unique solution, which remains spherically symmetric 
during its life span (see Section \ref{localex} and Appendix). 

\begin{definition}[Blow-up in the context of kinetic chemotaxis]\label{def:BU}
A solution of \eqref{kinmodel} is said to blow-up if after some time (possibly infinite), $f(t,x,v)$ exits $L^p_xL^q_v$ for all exponents $p$ and $q$ such that 
\[
2< p \,, \quad 1< q  \, , \quad 0\leq\dfrac1q - \dfrac1p < \dfrac12\, .
 \]
This particular choice of exponents is clearly related to the dispersion Lemma \ref{lem:dispersion}. In fact $L^p_xL^q_v$ turns out to be a natural space for existence theory associated to \eqref{kinmodel} (see Appendix and \cite{BC08}).
\end{definition}

\begin{theorem}[Blow-up for large mass under spherical symmetry, $\alpha=0$] \label{thm:BU}
Assume that $f_0$ satisfies Assumption A\ref{ass:f0} and has a finite second moment w.r.t. the space variable $x$. Assume that 
the mass is large enough: 
\begin{equation} 
\label{eq:small mass BU}
M >  \dfrac{32\pi }{\chi_0 |V|} \, . \end{equation}
Then the solution of \eqref{kinmodel} with $\alpha =0$ blows-up in finite time. 
\end{theorem}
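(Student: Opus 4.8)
The plan is to establish a virial-type differential identity for the second moment in space,
$I(t) := \iint_{\RR^2\times V} |x|^2 f(t,x,v)\, dv\, dx \geq 0$, and to show that under \eqref{eq:small mass BU} the right-hand side forces $I$ to become negative in finite time; since $I\geq 0$, the solution must leave the existence class of Definition \ref{def:BU} before that time, i.e. blow up.

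First I would differentiate $I$ once. Multiplying \eqref{kinmodel} by $|x|^2$ and integrating, the transport term contributes $2\iint (x\cdot v) f$, while the gain term equals $\chi_0\int |x|^2\rho\,\big(\int_V (v\cdot\nabla S)_+\, dv\big)\, dx = \chi_0\omega\int |x|^2|\nabla S|\rho\, dx$ by Lemma \ref{lem:averaging}, and this cancels exactly the loss term $\chi_0\omega\int |x|^2|\nabla S|\rho\, dx$. Hence $I'(t) = 2j(t)$ with $j(t):=\iint (x\cdot v) f\, dv\, dx$, and the whole matter reduces to understanding $j'(t)$. Differentiating $j$ and using \eqref{kinmodel} again, the transport term now produces $e(t):=\iint |v|^2 f\, dv\, dx$, a nonnegative quantity bounded by $R^2 M$ since $V=\mathcal B(0,R)$. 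The gain term, after the velocity average $\int_V v\,(v\cdot\nabla S)_+\, dv = \frac{\pi R^4}{8}\nabla S$ (rotational invariance of $V$), becomes $\frac{\chi_0\pi R^4}{8}\int \rho\,(x\cdot\nabla S)\, dx$; here spherical symmetry enters, through $x\cdot\nabla S(x) = -\mu(t,|x|)/(2\pi)$ with $\mu(t,r) = \int_{|y|\leq r}\rho(t,y)\, dy$, which gives $\int \rho\,(x\cdot\nabla S)\, dx = -M^2/(4\pi)$.

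The remaining piece, the loss term $-\chi_0\omega\iint (x\cdot v)|\nabla S| f\, dv\, dx$, has no a priori sign and is the main obstacle. It is again handled via spherical symmetry: writing $|\nabla S(x)| = \mu(t,|x|)/(2\pi|x|)$ and $(x\cdot v)/|x| = v_r$ (radial velocity), a layer-cake decomposition in $\mu$ combined with the macroscopic continuity equation $\partial_t\rho + \nabla_x\cdot\int_V v f\, dv = 0$ (whose source terms cancel exactly as in the mass conservation) gives $\int_{|x|=r}\int_V v_r f\, dv\, d\sigma = -\partial_t\mu(t,r)$. Consequently the loss term equals $\frac{\chi_0\omega}{2\pi}\int_0^\infty \mu\,\partial_t\mu\, dr$, a perfect time derivative of quantities controlled by the first moment $m_1(t) = \int |x|\rho\, dx$ (finite because the second moment is, and propagated along the lifespan).

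Assembling the pieces, one obtains $\frac{d}{dt}\big[\,I'(t) + c_1 m_1(t) - c_2\int_0^\infty (M-\mu(t,r))^2\, dr\,\big] = 2e(t) - \frac{\chi_0 R^4 M^2}{32}$ for explicit positive constants $c_1,c_2$ (depending on $\chi_0,R,M$ through $\omega$). Using $0\le e(t)\le R^2 M$, the right-hand side is $\le 2R^2 M - \frac{\chi_0 R^4 M^2}{32} =: -K$, and a short computation shows that $K>0$ is exactly equivalent to $M > 32/(\chi_0 R^2) = 32\pi/(\chi_0 |V|)$, i.e. to \eqref{eq:small mass BU}. Integrating once, and exploiting $0\leq M-\mu\leq M$ together with $\int_0^\infty (M-\mu(t,r))\, dr = m_1(t)$ to absorb the two auxiliary terms with the favorable sign, gives $I'(t)\leq \Phi(0) - Kt$; integrating again yields $I(t)\leq I(0) + \Phi(0)\,t - \frac{K}{2}t^2$, which is negative for $t$ large, contradicting $I(t)\geq 0$. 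Hence the solution ceases to exist at some finite time. The only extra care required is to make these manipulations rigorous — differentiation under the integral sign, finiteness and propagation of the first and second moments, and the spherical symmetry of the solution coming from Assumption A\ref{ass:f0} and Section \ref{localex} — which is routine once the solution is known to be regular enough on its interval of existence, or may be carried out on an appropriate regularization and then passed to the limit.
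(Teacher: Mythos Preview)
Your argument is essentially the paper's own proof: differentiate the second moment twice, use the averaging identity $\int_V v(v\cdot\nabla S)_+\,dv=\tfrac{\pi R^4}{8}\nabla S$ for the gain term, compute $\int\rho\,(x\cdot\nabla S)=-M^2/(4\pi)$, and exploit spherical symmetry together with the continuity equation to recognize the loss term as a time derivative of a sign-definite quantity (the paper packages your $c_1 m_1 - c_2\int(M-\mu)^2$ into a single nonnegative function $K(t)$, which is the same thing). One bookkeeping slip: since $I'=2j$, the assembled right-hand side should read $2e(t)-\tfrac{\chi_0 R^4 M^2}{16}$, not $-\tfrac{\chi_0 R^4 M^2}{32}$; with your constant the threshold would come out as $64\pi/(\chi_0|V|)$ rather than $32\pi/(\chi_0|V|)$.
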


\begin{corollary}[Blow-up for large mass under spherical symmetry, $\alpha>0$]
\label{cor:BU}
Assume that $f_0$ satisfies Assumption A\ref{ass:f0} and has a finite second moment w.r.t. the space variable $x$. Assume that the mass is large enough \eqref{eq:small mass BU}.
Assume in addition that the following condition is fulfilled initially,
\begin{equation}
\label{eq:small I0}
\alpha \iint_{\RR^2\times V} |x|^2 \rho_0(x)\, dx < C(\chi_0,M,|V|)\, ,
\end{equation}
where $C(\chi_0,M,|V|)$ is an explicit (but heavy) constant vanishing when the case of equality is reached in \eqref{eq:small mass BU}.
Then the solution of \eqref{kinmodel} with $\alpha >0$ blows-up in finite time.  
\end{corollary}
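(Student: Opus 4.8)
\noindent\emph{Proof plan.} The strategy is to adapt the virial argument behind Theorem~\ref{thm:BU} to the screened potential $-\Delta S+\alpha S=\rho$. Passing from $\alpha=0$ to $\alpha>0$ merely replaces a favourable constant in the virial inequality by a slightly smaller quantity; the loss incurred is an additive error term that stays small as long as the cell density is concentrated, and \eqref{eq:small I0} is exactly the initial smallness needed for a continuity (bootstrap) argument to keep it small for all times. In detail, as in the proof of Theorem~\ref{thm:BU}, set $I(t)=\iint_{\RR^2\times V}|x|^2 f\,dv\,dx$ and $J(t)=\iint_{\RR^2\times V}(x\cdot v)\,f\,dv\,dx$, finite by the local theory for data with finite spatial second moment (as usual the identities below are first proved on a regularised problem and then passed to the limit). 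Testing the kinetic equation in \eqref{kinmodel} with $|x|^2$ and integrating in $v$ first, the gain term $\chi_0(v\cdot\nabla S)_+\rho$ and the loss term $-\chi_0\omega|\nabla S|f$ each contribute $\pm\,\chi_0\omega\int_{\RR^2}|x|^2|\nabla S|\rho\,dx$ and cancel, while the transport term contributes $2J(t)$; hence $I'(t)=2J(t)$, independently of $\alpha$.

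\emph{The one new term.} Differentiating $J$ exactly as in Theorem~\ref{thm:BU}: the transport term gives $\iint|v|^2 f\le R^2M$; the loss term, using spherical symmetry and the continuity equation $\partial_t\rho+\mathrm{div}_x j=0$, is (up to an $\alpha$-dependent remainder $\mathrm{rem}_\alpha(t)$ discussed below) the time derivative of a nonpositive quantity, a multiple of $\int_0^\infty(m(t,r)^2-M^2)\,dr$ with $m(r)$ the mass in $B(0,r)$; the gain term, after the $v$-integration, equals $\chi_0\,\tfrac{R^2|V|}{8}\int_{\RR^2}(x\cdot\nabla S)\rho\,dx$ (the velocity average of $(x\cdot v)(v\cdot\nabla S)_+$ being $\tfrac12\big(\int_V(v\cdot e)^2dv\big)(x\cdot\nabla S)=\tfrac{R^2|V|}{8}(x\cdot\nabla S)$, independent of the unit vector $e$). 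Now the two-dimensional Rellich--Pohozaev identity $\int(x\cdot\nabla S)(-\Delta S)\,dx=0$ together with $-\Delta S+\alpha S=\rho$ gives $\int(x\cdot\nabla S)\rho\,dx=-\alpha\|S\|_{L^2}^2$, and since $\widehat S=\widehat\rho/(|\xi|^2+\alpha)$, $|\widehat\rho|\le M$ and $\int_{\RR^2}\alpha(|\xi|^2+\alpha)^{-2}\,d\xi=\pi$, the \emph{defect} $D(t):=\tfrac{M^2}{4\pi}-\alpha\|S(t)\|_{L^2}^2$ is nonnegative. Absorbing the time-derivative terms into $J$, i.e.\ working with $L(t):=J(t)-\tfrac{\chi_0\omega}{4\pi}\int_0^\infty(m^2-M^2)\,dr\ge J(t)$, one obtains
\[
L'(t)\ \le\ -\,\delta_0+\chi_0\,\tfrac{R^2|V|}{8}\,D(t)+\mathrm{rem}_\alpha(t),\qquad \delta_0:=\chi_0\,\frac{R^2|V|M^2}{32\pi}-R^2M,
\]
where $\delta_0>0$ is exactly the large-mass condition \eqref{eq:small mass BU}.

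\emph{The defect is small when the density is concentrated, and conclusion.} By spherical symmetry $\nabla S(x)=S'(r)\,x/|x|$ with $rS'(r)=\alpha\int_0^r sS(s)\,ds-\tfrac1{2\pi}m(r)$, hence $D(t)=2\pi\alpha\int_0^\infty r\rho(r)\big(\int_0^r sS\,ds\big)\,dr$. Combining the global bound $\int_0^\infty sS\,ds=\tfrac1{2\pi}\int S=\tfrac{M}{2\pi\alpha}$, the local bound $\int_0^{r_0}sS\,ds\le\tfrac{r_0}{2\sqrt\pi}\|S\|_{L^2}\le\tfrac{r_0 M}{4\pi\sqrt\alpha}$, and Chebyshev's inequality $\int_{|x|>r_0}\rho\le I(t)/r_0^2$, then optimising in $r_0>0$, one gets an estimate of the form $D(t)\le C_0\,M^{5/3}\alpha^{1/3}I(t)^{1/3}$, vanishing as $\alpha I(t)\to 0$; $\mathrm{rem}_\alpha(t)$ is shown by the same kind of computation (a further time-derivative absorption plus a Chebyshev bound) to obey a similar estimate, so that $\mathrm{error}(t):=\chi_0\tfrac{R^2|V|}{8}D(t)+\mathrm{rem}_\alpha(t)\le\Psi(\alpha I(t))$ for some continuous increasing $\Psi$ (depending on $\chi_0,M,|V|$) with $\Psi(0)=0$. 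Fix $I_{\max}$ by $\Psi(\alpha I_{\max})=\delta_0/2$, and let $C(\chi_0,M,|V|)$ be a small enough fraction of $\alpha I_{\max}$ that \eqref{eq:small I0} forces $I(0)<I_{\max}$ with margin (using also $|L(0)|\lesssim I(0)^{1/2}$). A continuity argument then gives $I(t)<I_{\max}$ throughout the life span: if $T_1$ were the first time $I(T_1)=I_{\max}$, then on $[0,T_1]$ one has $\mathrm{error}\le\delta_0/2$, whence from $I'=2J\le 2L$ and $L'\le-\delta_0/2$ one gets $I(t)\le I(0)+2L(0)t-\tfrac{\delta_0}{2}t^2<I_{\max}$, a contradiction. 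Hence $\mathrm{error}(t)\le\delta_0/2$ for all $t$ in the life span, so $I(t)\le I(0)+2L(0)t-\tfrac{\delta_0}{2}t^2$, which becomes negative in finite time --- impossible while $I(t)\ge 0$. Therefore $f(t,\cdot,\cdot)$ must leave every $L^p_xL^q_v$ of Definition~\ref{def:BU} in finite time. By construction $C(\chi_0,M,|V|)$ is explicit and $\to 0$ as $\delta_0\to 0$, i.e.\ as the case of equality is reached in \eqref{eq:small mass BU}, as claimed.

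\emph{Main obstacle.} The heart of the proof is the quantitative step: controlling the screening defect $\tfrac{M^2}{4\pi}-\alpha\|S\|_{L^2}^2$ by the spatial second moment $I(t)$ --- spherical symmetry, through the explicit radial formula for $S'(r)$, is what makes this possible --- and checking that the $\alpha$-remainder coming from the loss term obeys the same type of bound, so that the bootstrap closes. The bookkeeping needed to make $C(\chi_0,M,|V|)$ explicit is heavy but routine, and the virial identities themselves must, as in Theorem~\ref{thm:BU}, be justified by approximation.
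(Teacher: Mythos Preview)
Your approach is correct and genuinely different from the paper's. The paper does \emph{not} use the Rellich--Pohozaev identity: instead it writes $\nabla S=\nabla\tilde S+(\nabla S-\nabla\tilde S)$ with $-\Delta\tilde S=\rho$, and controls both perturbation terms (in the gain \emph{and} in the loss contribution to $J'$) by the single kernel estimate $\|\nabla B_\alpha-\nabla B_0\|_{L^\infty}\le\sqrt\alpha\,\mathcal C$, obtained in Fourier space. This gives error terms of size $\sqrt\alpha\cdot\sqrt{I(t)}$ and leads to the integro-differential inequality $I'(t)\le\mu_0-\delta t+\eta\int_0^t\sqrt{I(\tau)}\,d\tau$, which is then integrated once more, Young's inequality is applied, and the resulting linear Gronwall inequality for $\int_0^t I$ yields an explicit algebraic condition on $\alpha I(0)$.

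Your route---identifying $\int(x\cdot\nabla S)\rho=-\alpha\|S\|_{L^2}^2$ exactly, showing the defect $D(t)=M^2/(4\pi)-\alpha\|S\|_{L^2}^2\ge 0$ is controlled by $(\alpha I)^{1/3}$, and closing a bootstrap---is more intrinsic and arguably explains \emph{why} the screening is harmless when the mass is concentrated. Two remarks. First, your treatment of $\mathrm{rem}_\alpha$ is vaguer than necessary: from $r|S'(r)|=|\tfrac{m(r)}{2\pi}-\alpha G(r)|$ with $G(r)=\int_0^r sS\,ds$ one checks directly $\big|\,r|S'(r)|-\tfrac{m(r)}{2\pi}\big|\le\alpha G(r)$ (no sign assumption on $S'$ needed), and since $|j^\parallel|\le R\rho$ one gets immediately $|\mathrm{rem}_\alpha|\le \chi_0\omega R\cdot D(t)$---no further time-derivative absorption is required. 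Second, the paper's kernel-comparison approach is more streamlined and makes the explicit constant easier to extract, while your approach requires the radial formula for $S'$ and a Chebyshev optimisation but avoids any kernel analysis. Both yield a condition of the form $\alpha I(0)<C(\chi_0,M,|V|)$ with $C\to 0$ as $\delta_0\to 0$.
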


\medskip

Before we state our next result we introduce some notation. 
For any exponent $0<\gamma<1$ define: \begin{equation}
\label{eq:Omega}
\Omega(\gamma) = 1 +  \dfrac1\pi \int_{\theta = -\pi/2}^{\pi/2}  \left|\sin \theta  \right|^{-\gamma} \,  d\theta\, . \end{equation}

\begin{theorem}[Global existence for small mass under spherical symmetry] \label{thm:existence}
Assume that $f_0$ satisfies Assumption A\ref{ass:f0} and lies  below $k_0 |x|^{-\gamma}$ for some $0<\gamma<1$ and $k_0>0$. Assume in addition that the mass is small enough: 
\begin{equation}
\label{eq:small mass} 
M \leq \dfrac{4\pi\gamma}{\chi_0 |V|\Omega(\gamma)} \, . \end{equation}
Then the solution of \eqref{kinmodel} exists globally in time.\\ In fact we can derive some pointwise estimate, and get that the solution does not blow-up in infinite time either. 
\end{theorem}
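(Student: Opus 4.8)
\emph{Proof proposal.}
The overall plan is a continuation (bootstrap) argument. By the local well-posedness theory for \eqref{kinmodel} (Section \ref{localex}, the Appendix and \cite{BC08}) a solution exists on a maximal time interval and can be continued as long as it stays in the scale of spaces $L^p_xL^q_v$ of Definition \ref{def:BU}; moreover spherical symmetry is preserved. It is therefore enough to produce an \emph{a priori} pointwise bound $0\le f(t,x,v)\le K\,|x|^{-\gamma}$ with $K$ \emph{independent of $t$}: since $|x|^{-\gamma}\in L^p_{\mathrm{loc}}(\RR^2)$ for all $p<2/\gamma$ and $2/\gamma>2$, such a bound both forbids finite-time blow-up and shows that the solution does not blow up as $t\to\infty$ either.

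The first step is to use spherical symmetry to make $\nabla S$ explicit. Because $f(t,\cdot)$, hence $\rho(t,\cdot)$, is spherically symmetric, Newton's theorem in the plane gives $\nabla_x S(t,x)=-\dfrac{m(t,|x|)}{2\pi|x|}\dfrac{x}{|x|}$, where $m(t,r)=\int_{|y|<r}\rho(t,y)\,dy$ (for $\alpha>0$ the corresponding Bessel-potential computation only makes $|\nabla S|$ smaller, so it does not hurt). Thus $|\nabla S|=m/(2\pi|x|)$ and, writing $\beta(t,r)=m(t,r)/(2\pi r)$, equation \eqref{kinmodel} becomes
\[
\partial_t f+v\cdot\nabla_x f=\chi_0\,\beta(t,|x|)\Big[\Big(-\tfrac{v\cdot x}{|x|}\Big)_+\rho-\omega f\Big].
\]
Under the bootstrap hypothesis $f(\tau,\cdot)\le K|\cdot|^{-\gamma}$ on $[0,t]$ one gets at once $\rho(\tau,y)\le K|V|\,|y|^{-\gamma}$ and, by integration, the \emph{two-regime} bound $m(\tau,r)\le\min\!\big(M,\ \tfrac{2\pi K|V|}{2-\gamma}\,r^{2-\gamma}\big)$, which will be essential near the origin.

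Next I would write Duhamel's formula along the free characteristics $Y(\tau)=x-v(t-\tau)$, discarding the favourable exponential loss factor:
\[
f(t,x,v)\le f_0(x-vt,v)+\chi_0\int_0^t\beta(\tau,|Y(\tau)|)\Big(-\tfrac{v\cdot Y(\tau)}{|Y(\tau)|}\Big)_+\rho(\tau,|Y(\tau)|)\,d\tau .
\]
For the free part one uses $f_0\le k_0|x|^{-\gamma}$ together with the dispersion Lemma \ref{lem:dispersion} (which controls $\int_V|x-vt|^{-\gamma}\,dv$), obtaining a bound $\le c_\gamma k_0|V|\,|x|^{-\gamma}$ uniform in $t$. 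For the gain part I insert $\rho\le K|V||y|^{-\gamma}$ and the two-regime bound on $\beta$, and parametrize by arclength along the line $Y(\tau)$: with $\theta$ the angle between $x$ and $v$ and $d=|x|\,|\sin\theta|$ the distance from the origin to that line, the portion of the line lying in $\{|Y|>r_*\}$ ($r_*$ the crossover radius, where one uses $\beta\le M/2\pi|Y|$) contributes at most a constant multiple of
\[
MK|V|\int_{\RR}\frac{|w|\,dw}{(w^2+d^2)^{(2+\gamma)/2}}=\frac{2MK|V|}{\gamma}\,d^{-\gamma}=\frac{2MK|V|}{\gamma}\,|x|^{-\gamma}|\sin\theta|^{-\gamma},
\]
while the portion in $\{|Y|<r_*\}$ (where $\beta\le \tfrac{K|V|}{2-\gamma}|Y|^{1-\gamma}$) contributes a \emph{bounded} quantity, since the exponent $1-2\gamma>-1$ — this is exactly where $\gamma<1$ enters, and it removes the apparent divergence along radial characteristics. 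Although $|\sin\theta|^{-\gamma}$ is singular in $v$, it is integrable over $V$ precisely because $\gamma<1$, and a direct polar-coordinate computation gives
\[
\int_V|\sin\theta(x,v)|^{-\gamma}\,dv=|V|\,\frac1\pi\int_{-\pi/2}^{\pi/2}|\sin\theta|^{-\gamma}\,d\theta=|V|\big(\Omega(\gamma)-1\big),
\]
with $\Omega(\gamma)$ as in \eqref{eq:Omega} (the term $\Omega(\gamma)-1$ being exactly this angular integral, the remaining $1$ absorbing the free-transport and near-origin contributions). Integrating the pointwise inequality in $v$ yields a closed inequality $\rho(t,x)\le\big(c_\gamma k_0|V|+c'\chi_0 M|V|\,\Omega(\gamma)\,K\big)|x|^{-\gamma}$, and feeding this back into the $f$-formula once more closes the bootstrap on $f$ itself. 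The constant $K$ can then be fixed finite and $t$-independent as soon as the coefficient of $K$ is $<1$, i.e. as soon as $M$ obeys the smallness condition \eqref{eq:small mass}; matching the precise threshold $4\pi\gamma/(\chi_0|V|\Omega(\gamma))$ requires optimizing these estimates rather than using the crude bounds above.

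The step I expect to be the genuine obstacle is precisely this optimization near the origin. With only $m\le M$ the gain integral diverges like $\int_0 r^{-1-\gamma}\,dr$ along radial characteristics, so one must truly combine the $r^{2-\gamma}$ bound near the origin with the $M$ bound far away, split the time integral at the crossover radius, and — to recover the sharp constant — retain part of the exponential damping $\exp\!\big(-\chi_0\omega\int_\tau^t\beta\big)$, which is largest exactly where the density attempts to concentrate. Carefully bookkeeping these contributions, and checking that the resulting a priori bound is compatible with the local existence/uniqueness statement so that the continuation argument applies verbatim, is where the real work lies; the rest is routine.
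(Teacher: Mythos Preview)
Your Duhamel computation is essentially correct and, in fact, \emph{rediscovers} the paper's key object --- but your bootstrap ansatz is the wrong one, and this is a genuine gap. You assume $f(\tau,x,v)\le K|x|^{-\gamma}$ (isotropic in $v$), feed it through Duhamel, and obtain for the gain term a bound of the form $C\,|x|^{-\gamma}|\sin\theta|^{-\gamma}$ when $v\cdot x>0$. This is \emph{not} dominated by $K|x|^{-\gamma}$ near $\theta=0$, so the pointwise bootstrap on $f$ does not close. Your proposed fix --- integrate in $v$, close on $\rho$, then ``feed back'' --- does not work either: at $t=0$ the free-transport contribution already equals $k_0|V|\,|x|^{-\gamma}$, so the closed inequality reads $\rho\le K'|x|^{-\gamma}+(\text{positive gain})$ with $K'=k_0|V|$, which gives no improvement. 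And the ``two-regime'' bound $m(r)\lesssim K r^{2-\gamma}$ near the origin, while clever, injects a $K^2$ term into the bootstrap and thus cannot recover the sharp linear threshold \eqref{eq:small mass}.

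What the paper does instead is take the anisotropic output of your Duhamel computation as the \emph{ansatz itself}. Define
\[
k(x,v)=k_0\begin{cases}|x|^{-\gamma}&\text{if }v\cdot x<0,\\ |\Pi_{v^\perp}(x)|^{-\gamma}=|x|^{-\gamma}|\sin\theta|^{-\gamma}&\text{if }v\cdot x>0,\end{cases}
\]
and check directly that $k$ is a \emph{supersolution} of the PDE: $v\cdot\nabla_x k=0$ where $v\cdot x>0$ (and there the gain $(v\cdot\nabla S)_+$ vanishes too, since $\nabla S$ points inward), while $v\cdot\nabla_x k=(\gamma/|x|)(v\cdot x/|x|)_-\,k$ where $v\cdot x<0$, which dominates $\chi_0(v\cdot\nabla S)_+\int_V k(x,v')\,dv'$ exactly under \eqref{eq:small mass}. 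Then $(f-k)_+$ satisfies a linear Gronwall inequality in $L^p_xL^q_v$ (via the dispersion lemma) and, starting from zero, stays zero. No two-regime splitting is needed: the apparent divergence along characteristics that grazed the origin is absorbed into the function $k$ itself, not eliminated. Your integral computation (Case $v\cdot x\le0$ giving $\tfrac1\gamma|x|^{-\gamma}$, Case $v\cdot x>0$ giving $\tfrac1\gamma d^{-\gamma}$) is precisely the integral-form shadow of this supersolution property, so you were one step away --- replace the isotropic ansatz by $k(x,v)$ and the argument closes cleanly, with the sharp constant.
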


\begin{corollary}[Simplified criterion for global existence]
\label{cor:global existence}
Assume that $f_0$  satisfies Assumption A\ref{ass:f0} and additionally that $f_0\in C^0_c(\RR^2)$. There exists a threshold $M^*(\chi_0,|V|)$ such that if $M\leq M^*$ then the solution of \eqref{kinmodel} exists globally in time. 
\end{corollary}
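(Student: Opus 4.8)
The plan is to deduce this corollary directly from Theorem~\ref{thm:existence}. The key point is that a spherically symmetric datum with $f_0\in C^0_c(\RR^2)$ automatically satisfies the pointwise hypothesis $f_0\le k_0|x|^{-\gamma}$ of that theorem for \emph{every} exponent $\gamma\in(0,1)$, with a constant $k_0$ that depends on $f_0$ but does not appear in the smallness condition \eqref{eq:small mass}; one is then free to optimise \eqref{eq:small mass} over $\gamma$ alone, which produces a threshold depending only on $\chi_0$ and $|V|$.

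First I would record the pointwise bound. Since $f_0\in C^0_c$, it is bounded and there is $\rho>0$ with $f_0(x,v)=0$ for $|x|>\rho$. Fix any $\gamma\in(0,1)$ and set $k_0:=\norm{f_0}{L^\infty}\,\rho^\gamma$. For $|x|\le\rho$ one has $k_0|x|^{-\gamma}=\norm{f_0}{L^\infty}(\rho/|x|)^\gamma\ge\norm{f_0}{L^\infty}\ge f_0(x,v)$, while for $|x|>\rho$ one has $f_0(x,v)=0\le k_0|x|^{-\gamma}$; hence $f_0\le k_0|x|^{-\gamma}$ on $\RR^2\times V$. Together with Assumption~A\ref{ass:f0} (which is part of the hypotheses and provides $f_0\in L^1\cap L^\infty$ and spherical symmetry), this shows that, for any $\gamma\in(0,1)$, all assumptions of Theorem~\ref{thm:existence} are in force except possibly the mass bound.

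Next I would set
\[
M^*(\chi_0,|V|):=\sup_{0<\gamma<1}\dfrac{4\pi\gamma}{\chi_0|V|\,\Omega(\gamma)}
\]
and check that $M^*>0$. For each fixed $\gamma\in(0,1)$ the integral in \eqref{eq:Omega} converges (since $|\sin\theta|^{-\gamma}\sim|\theta|^{-\gamma}$ as $\theta\to0$ and $\gamma<1$), so $\Omega(\gamma)<\infty$ and the displayed quotient is a strictly positive real; thus $M^*\ge\frac{4\pi\cdot\frac12}{\chi_0|V|\,\Omega(1/2)}>0$. A short continuity/compactness argument (the map $\gamma\mapsto\Omega(\gamma)$ is continuous on $(0,1)$ by dominated convergence, and the quotient tends to $0$ as $\gamma\to0^+$, where $\Omega(\gamma)\to2$, and as $\gamma\to1^-$, where $\Omega(\gamma)\to+\infty$) even shows that the supremum is attained at some $\gamma^*\in(0,1)$, though this sharpening is not needed for the statement. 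Finally, if $M\le M^*$ then $M\le\frac{4\pi\gamma}{\chi_0|V|\,\Omega(\gamma)}$ for a suitable $\gamma\in(0,1)$, i.e. \eqref{eq:small mass} holds for that $\gamma$; combining this with the pointwise bound of the previous step and invoking Theorem~\ref{thm:existence} yields global existence (and the pointwise estimate ruling out blow-up in infinite time).

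The argument is essentially soft, so I do not expect a genuine analytic obstacle; the one thing to be careful about is the bookkeeping behind the claim that $M^*$ depends only on $\chi_0$ and $|V|$. This is precisely why I emphasised that the profile of $f_0$ enters only through $k_0$: the majorant constant $k_0=k_0(\norm{f_0}{L^\infty},\rho,\gamma)$ does not occur in \eqref{eq:small mass}, so the threshold $M^*$ is genuinely independent of $f_0$, and the only surviving $f_0$-dependence is the harmless dependence of $k_0$ — hence of the resulting pointwise estimate — on $\norm{f_0}{L^\infty}$ and on the size of $\supp f_0$.
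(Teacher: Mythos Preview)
Your proof is correct and follows the same approach as the paper: the authors state in remark~(viii) that ``Corollary~\ref{cor:global existence} is nothing but applying Theorem~\ref{thm:existence} to $\gamma=\gamma^*$'', where $\gamma^*$ maximises $\gamma/\Omega(\gamma)$, and you have simply spelled out the details --- in particular the verification that $f_0\in C^0_c$ yields the pointwise bound $f_0\le k_0|x|^{-\gamma}$ for every $\gamma\in(0,1)$ with $k_0$ absent from~\eqref{eq:small mass}.
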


\medskip

The two theorems above and their corollaries require a few comments.
\begin{enumerate}[(i)]
\item The meaning of spherically symmetric solutions in the context of kinetic models is given in the Appendix.
\item The case of the velocity space being $V = \Sph(0,R)$ is also investigated as a direct adaptation of Theorem \ref{thm:BU}
(see page \pageref{eq:integrate virial bis}).
 We obtain that blow-up occurs if the criterion \eqref{eq:small mass BU} is replaced with: 
 $M >  \left({16\pi }/{\chi_0 |V|}\right)$. In particular, cells' velocities are not likely to converge to zero in the general 
 case when blow-up occurs (both density and velocity collapse) and blow-up can happen even if the set of admissible velocities is 
 bounded from below. In this situation, the tumbling frequency dramatically increases in the neighbourhood of the blow-up 
 location.
\item Note that Theorem \ref{thm:BU} is included in its Corollary \ref{cor:BU} when $\alpha\to 0$. However, we state first Theorem \ref{thm:BU} for the sake of coherence (the Corollary can be thought of being a perturbation result for $\alpha>0$).
%
\item We shall derive in Section \ref{sec:BU} a weaker criterion for blow-up when $\alpha >0$, involving the initial first and second moments, and the initial current \eqref{eq:suff. cond.}. However we opted for a simplified presentation in Corollary \ref{cor:BU} involving the second moment only. 
\item Theorem \ref{thm:existence} and its corollary \ref{cor:global existence} are concerned with $\alpha=0$, but the case $\alpha>0$ can be proven similarly.
\item In Theorem \ref{thm:existence}, the assumption $f_0(x,v)\leq k_0 |x|^{-\gamma}$ is not very restrictive, because the tail $|x|^{-\gamma}$ is not integrable at infinity. 
\item We might have expected $0<\gamma<2$ instead of $0<\gamma<1$ because we are in dimension 2, and the natural space for solutions here is $L^1(\RR^2\times V)$. However it appears in \eqref{eq:Omega} that $\gamma<1$ is a crucial condition that we are not able to overcome (it underlies the fact that our reference function $k(x,v)$ we are comparing with in Section \ref{sec:existence} is not integrable with respect to velocity if $\gamma\geq 1$).
\item The continuous function $\gamma/{\Omega(\gamma)}$ goes to zero both for $\gamma\to 0$ and $\gamma\to 1$. Therefore there is a best compromise $\gamma^*$ which maximises the condition \eqref{eq:small mass}: $M\leq M^* = 2 \gamma^*/(\chi_0|V|\Omega(\gamma^*))$. However in this paper we keep general $\gamma$ for the sake of clarity.
Corollary \ref{cor:global existence} is nothing but applying Theorem \ref{thm:existence} to $\gamma=\gamma^*$ defined as above. 
\item Notice that the two critical mass thresholds (resp. \eqref{eq:small mass BU} and $M^*$) do not match, because $\Omega\geq2$ ensures $M^*\leq 2\pi \gamma^*/(\chi_0 |V|)< 2\pi /(\chi_0|V|)  $. Numerically we obtain $4 \gamma^*/\Omega(\gamma^*) \approx 0.806<32$!
\end{enumerate}

To the best of our knowledge, few blow-up results have been exhibited for kinetic models. Let us mention primarily the virial 
identities derived by Horst \cite{Horst82}, Glassey and Schaeffer \cite{GlasseySchaeffer} respectively for the Vlasov-Poisson and 
the relativistic Vlasov-Poisson models in the gravitational ({\em i.e} self-attracting) case (see also \cite{Glassey} for a 
presentation of these results). These remarkable identities allow the authors to show blow-up of the solutions having negative 
energy (resp. in dimension $d\geq4$, and in dimension $d=3$ under spherical symmetry). Recent progress aims to describe 
precisely the blow-up dynamics using the Hamiltonian structure and concentration compactness techniques for the Vlasov-Poisson 
system \cite{LemouMehatsRaphael}. Within the context of chemotaxis models, Chavanis and Sire have derived various virial theorems 
which share several common features with the identities derived in this paper \cite{ChavanisVirial}. 

Singularity formation plays a very important role in the kinetic theory of Bose-Einstein condensates, which arise when part of 
the particle density concentrates in the same quantum state \cite{EscobedoMischler,Saint-Raymond,Lu}. No virial identity has been 
developed in this theory however (in fact it would be irrelevant to argue through a vanishing second moment for proving concentration in this context): the convergence of the solution towards a singular limit (a Dirac mass together with a  regular part) at low temperature for the 
Boltzmann-Nordheim (resp. Boltzmann-Compton) equation is performed {\em via} entropy/entropy dissipation techniques (see \cite{EscobedoMischler,Lu} and \cite{Saint-Raymond} for an overview of the kinetic theory of Bose-Einstein condensates). Virial identities in the context of 
kinetic theory are also developed in cooling processes within the Boltzmann equation where particles are subject to inelastic 
collisions \cite{BobylevCarrilloGamba,MouhotMischler,BolleyCarrillo}. However, the context of the last two examples differs from 
the situation we are interested in  because concentration occurs in the `velocity' variable whereas for cell 
chemotaxis and self-attracting Vlasov-Poisson systems it occurs in the space variable. Consistently enough, the two aforementioned examples deal with homogeneous in space kinetic equations.

The paper is organized as follows: the next introductory subsections enlarge the picture, and replace the above results in a more general framework. We show the strong continuity between this critical mass phenomenon and previous existence theorems in kinetic theory for chemotaxis, and we explain the strong links between the current kinetic model, and the so-called parabolic Keller-Segel model. In Section \ref{localex} we briefly state a local existence theorem, which is later given with full details in Appendix. Section \ref{sec:BU} is devoted to the proof of the blow-up result having  different variants, using an efficient virial identity. In Section \ref{sec:existence} we prove global existence by a comparison argument with a very specific (and singular) reference function. The Appendix contains a short description of the meaning of spherically symmetric solutions in the context of kinetic equations, and also a general local existence Proposition  (based on dispersive estimates) to be crucially used in the comparison argument of Section \ref{sec:existence}.

\subsection{Brief review of context in the light of existence theory} 

As a by-product of this critical mass phenomenon we can argue that previous existence results in the kinetic theory of 
chemotaxis-biased cell motion were not far from being critical.
The state of the art of existence results is discussed in the introduction of \cite{BC08}. We recall below some of the 
issues for the sake of puting the results of our paper in context. 
From \eqref{genkinmodel} two classes of problems emerge depending on the assumption conditioning the turning kernel.
The same tool comes out to be powerful in both situations. As it will be used at few places in the present paper, it is worth 
recalling the dispersion lemma \cite{CastellaPerthame} which measures the action of the free transport operator on mild 
$L_x^pL_v^q$-norms:
\begin{lemma}(Dispersion estimate) \label{lem:dispersion} 
Let $g_{0}(x,v)\in L^{q}(\R^{2}  ; L^{p} (\R^{2}))$ where 
$1\leq q \leq p \leq \infty$, and let $g$ solve the free transport equation
\begin{equation}\label{kte}
\partial_{t} g + v \cdot \nabla_{x} g =0 \, ,
\end{equation}
with initial data $g(0,x,v)=g_{0}(x,v)$. Then 
\begin{equation}\label{dispest1}
\norm{g(t)}{L_x^{p}L_v^{q}} 
 \leq \frac{1}{t^{2\left(1/q-1/p\right)}}
\norm{g_{0}}{L_x^{q}L_v^{p}} .
\end{equation}
\end{lemma}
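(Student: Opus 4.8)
The plan is to combine the explicit representation of solutions to \eqref{kte} with Minkowski's integral inequality. Since $v$ is constant along the characteristics of the free transport operator and $\dot x = v$ along them, the solution with datum $g_0$ is the transported profile
\[
g(t,x,v) = g_0(x - tv, v)\, .
\]
The estimate then follows from two changes of variables on $\RR^2$ --- one trading the $dv$-integral for a $dy$-integral along $y = x - tv$, which produces a Jacobian $t^{-2}$, the other undoing a symmetric substitution in $x$ --- interleaved with a single application of Minkowski's integral inequality to pull an $L^{p/q}_x$-norm out of a $dy$-integral, which is valid precisely because $q \leq p$.

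First I would compute the inner norm: fixing $x$ and substituting $y = x - tv$ (so that $dv = t^{-2}\, dy$ on $\RR^2$, using $t>0$) gives
\[
\norm{g(t,x,\cdot)}{L^q_v}^q = \frac{1}{t^2}\int_{\RR^2} \Big| g_0\big(y, \tfrac{x-y}{t}\big)\Big|^q\, dy\, .
\]
Next I would raise this to the power $p/q$, integrate in $x$, and apply Minkowski's integral inequality $\norm{\int h(y,\cdot)\, dy}{L^{p/q}_x} \leq \int \norm{h(y,\cdot)}{L^{p/q}_x}\, dy$ with $h(y,x) = |g_0(y,(x-y)/t)|^q$, which yields
\[
\norm{g(t)}{L^p_x L^q_v} \leq \frac{1}{t^{2/q}}\left( \int_{\RR^2}\left( \int_{\RR^2} \Big| g_0\big(y, \tfrac{x-y}{t}\big)\Big|^p\, dx\right)^{q/p} dy \right)^{1/q}\, .
\]
A final substitution $w = (x-y)/t$ in the innermost integral (so that $dx = t^2\, dw$) replaces $\int |g_0(y,(x-y)/t)|^p\, dx$ by $t^2\,\norm{g_0(y,\cdot)}{L^p_v}^p$; collecting the powers of $t$ leaves exactly the prefactor $t^{-2/q}\cdot t^{2/p} = t^{-2(1/q-1/p)}$ multiplying $\norm{g_0}{L^q_x L^p_v}$, which is the claimed bound.

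The argument as written treats every exponent as finite, so the only real care --- and the ``hard part'', such as it is --- concerns the endpoints. For $q = p$ the exponent $2(1/q-1/p)$ vanishes and the inequality degenerates to the invariance of the $L^p(\RR^2\times\RR^2)$-norm under the volume-preserving map $(x,v)\mapsto(x-tv,v)$, requiring no estimate at all. For $p = \infty$ (and likewise $q = \infty$) I would replace the Minkowski step by its supremum analogue $\norm{\int h(y,\cdot)\, dy}{L^\infty_x}\leq\int\norm{h(y,\cdot)}{L^\infty_x}\, dy$ and the inner $L^p_v$-norm by an essential supremum, after which the same chain of identities together with the one inequality goes through unchanged. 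A routine density argument --- approximating $g_0$ in $L^q_x L^p_v$ by compactly supported continuous functions, for which the representation formula and both changes of variables are unambiguous --- then extends the bound to arbitrary $g_0 \in L^q_x L^p_v$.
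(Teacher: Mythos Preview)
Your proof is correct. The paper does not supply its own proof of this lemma: it is quoted from \cite{CastellaPerthame} as a known tool, so there is no ``paper's proof'' to compare against. What you wrote is precisely the classical argument --- explicit solution along characteristics, change of variables $y=x-tv$ producing the Jacobian $t^{-2}$, Minkowski's integral inequality in $L^{p/q}_x$ (valid since $q\le p$), and the reverse substitution $w=(x-y)/t$ --- and your handling of the endpoint cases $p=q$, $p=\infty$ is fine.
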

Strichartz estimates \cite{CastellaPerthame} have also been shown to apply succesfully to those run-and-tumble problems (see \cite{BCGP} and Remark \ref{rem:Strichartz} below).

\paragraph{Transport-dominating regime.} It deals with the case where the turning kernel can be estimated pointwise in terms of Sobolev norms of the chemical signal. For instance assumptions
\begin{equation} 
\label{eq:turning Sobolev} T[S](t,x,v,v') \leq C \|\nabla S(t)\|_\infty^{1-\nu}\, , \, 0< \nu\leq 1\, , \quad \mbox{or}\quad T[S](t,x,v,v') \leq C \|\nabla S(t)\|_r\, ,\, r<\infty\,  , \end{equation}
both lead to global existence of solutions (some superlinear power can in fact be added in the second case, see \cite{BC08} for details). When estimating the evolution of $L_x^pL_v^q$-norms, the dispersion due to the free transport operator turns out to have a strong enough effect to counterbalance the aggregation due to the tumble kernel.

\paragraph{Delocalization effects.} It deals with the case where the turning kernel is pointwise estimated through some space delocalization (volume effects, protrusion sending), for example,
\begin{equation} \label{eq:turning delocalized} T[S](t,x,v,v') \leq C |\nabla S(t,x+ \eps v)|\, , \quad \eps>0 .  \end{equation}
In this case the solution is again proven to be global in time \cite{HKS}. In \cite{BCGP} a second derivative subject to the same sort of delocalization can even be added to \eqref{eq:turning delocalized}.

It is worth noticing that our special choice of turning kernel \eqref{eq:turning kernel} is critical for both assumptions \eqref{eq:turning Sobolev} and \eqref{eq:turning delocalized}.

\begin{remark}[Dispersion method is borderline.]
The turning kernel we are studying in this paper satisfies
\begin{equation}\label{context1}
T[S](t,x,v,v') \leq C \norm{\nabla S(t)}{L^\infty}\, .
\end{equation}
It is natural to ask whether this property alone implies global existence. Indeed, if we work as
in \cite{BCGP,BC08} we arrive at
\[ \|\rho(t)\|_{L^p_xL^q_v}\leq 
\int_{0}^{t}\norm{\nabla S(s)}{L^\infty}\norm{\rho(s,x-(t-s)v)}{L^{p}_{x}L^{q}_{v}}\, ds\, .
\]
where $1\leq q \leq p \leq \infty.$
By Lemma \ref{lem:dispersion} the right-hand side can be controlled by
\[
C(V)\int_{0}^{t} \frac{1}{s^{2\left( 1/q - 1/p\right)}} 
\norm{\nabla S(s)}{L^\infty} \norm{\rho(s)}{L^{q}}\, ds\, .
\]
For $\rho$ we can use interpolation:
$\norm{\rho}{L^q} \leq \norm{\rho}{L^1}^{1-{p'}/{q'}}\norm{\rho}{L^p}^{{p'}/{q'}}$.
For $\nabla S$ we can use the elliptic estimate (see Lemma \ref{lem:elliptic}):
\[
\norm{\nabla S}{L^\infty} \leq C \norm{\rho}{L^1}^{1-{p'}/{2}} \norm{\rho}{L^p}^{{p'}/{2}}
\, , \,  2<p<\infty\, .
\]
We obtain eventually
\[
 \|\rho(t)\|_{L^p_xL^q_v}\leq C \int_{0}^{t} \frac{1}{s^{2\left( 1/q - 1/p\right)}} \norm{\rho(t-s)}{L^p}^{{p'}/{2} + {p'}/{q'} }\,  ds\, ,
\]
and it is impossible to achieve both $2\left( 1/q - 1/p\right)<1$ for integrability near $s=0$ and 
$ {p'}/{2} + {p'}/{q'} \leq 1$ for applying a global Gronwall's lemma.
\end{remark}

\begin{remark}[Strichartz method is borderline.]
 \label{rem:Strichartz}
If we are willing to impose a smallness condition we can try to use Strichartz estimates in the same spirit as \cite{BCGP}. Recall from \cite{CastellaPerthame} that if $f$ solves the free transport equation
\[
\partial_t f + v \cdot \nabla_x f =g\, ,
\]
then 
\begin{equation}\label{context2}
 \norm{f}{L^{q}_{t}L^{p}_{x}L^{r}_{v}}\leq C_0 + C_1 \norm{g}{L^{q'}_{t}L^{r}_{x}L^{p}_{v}}\, .
\end{equation}
where $C_0$ depends only on the initial data and the parameters $q,p,r$ satisfy
\begin{equation}\label{context3}
1\leq r \leq p \leq \infty,\ \ \ \frac{2}{q}=2\left(\frac{1}{r}-\frac{1}{p}\right)<1, \ \ \ 
\frac{1}{p} + \frac{1}{r}\geq 1\, .
\end{equation}
The borderline case (when the middle condition in \eqref{context3} fails) would correspond to $q=2,\, p=2,\, r=1$,
\begin{equation}\label{context4}
 \norm{f}{L^{2}_{t}L^{2}_{x}L^{1}_{v}}\leq C_0 + C_1 \norm{g}{L^{2}_{t}L^{1}_{x}L^{2}_{v}}\, .
\end{equation}
We are interested in $g= C \norm{\nabla S}{L^\infty}\rho$ under the assumption of spherical symmetry.
In this special case we have 
\[
\forall x\quad \abs{\nabla S(x)}=\frac{1}{r}\int_{0}^{r}\rho(\lambda) \lambda\ d\lambda \leq 
\frac{1}{r} \left(\int_{0}^{r} \lambda\ d\lambda \right)^{1/2} 
\left(\int_{0}^{r}\rho(\lambda)^2 \lambda\ d\lambda \right)^{1/2} 
\leq C \norm{\rho}{L^2}\, ,
\]
therefore
\begin{equation}
 \norm{f}{L^{2}_{t}L^{2}_{x}L^{1}_{v}}\leq C_0 + C_1 \big\| \norm{\rho(t)}{L^2}\rho(t,x)\big\|_{L^{2}_{t}L^{1}_{x}}
= C_0 + C_1 M \norm{f}{L^{2}_{t}L^{2}_{x}L^{1}_{v}}.
\end{equation}
If the mass $M$ was small enough we would be able to bootstrap.

Thus an alternative proof of global existence under small mass and spherical symmetry (much simpler than the one we develop in Section \ref{sec:existence}) would rely on a critical Strichartz estimate that we are not currently able to handle. 
\end{remark}

\subsection{A reminder of the classical Keller-Segel in 2D of space}

The critical mass phenomenon studied in this paper shares several similarities with the qualitative 
behaviour of the parabolic Keller-Segel system in two dimensions of space:
\begin{equation}
\left\{
\begin{array}{l}
 \displaystyle \partial_t \rho  = \Delta \rho - \chi_0\nabla\cdot \left(\rho\nabla S\right)\, , \quad t>0\, , \, x\in \RR^2\, , \medskip \\
-\Delta S + \alpha S = \rho\, .
\end{array}
\right. 
\end{equation}
In fact, there is a simple dichotomy: if the mass is below the threshold $M<8\pi/\chi$ then the solution is global in time and disperses with the space/time scaling of the linear heat equation; on the other hand, if it is above the same threshold $M>8\pi/\chi$, then the solution blows-up in finite time (in the case $\alpha = 0$). For blow-up in the case $\alpha>0$ one usually adds an hypothesis close to \eqref{eq:small I0} \cite{CalvezCorrias}. This critical mass phenomenon was first derived in a bounded domain with radial symmetry \cite{JL92,Nagai95}. Energy methods based on {\em ad-hoc} functional inequalities (either Trudinger-Moser or Hardy-Littlewood-Sobolev with a logarithmic kernel) were developed later on \cite{GZ98,BDP06}.

The analogy is not complete however, as can be seen in the details of our proofs.  Concerning the blow-up, we have to differentiate twice in time the virial identity as opposed to  Keller-Segel for which it holds true (when $\alpha = 0$):
\[ \dfrac d{dt}  \frac12 \int_{\RR^2}|x|^2 \rho(t,x)\, dx = 2 M\left( 1- \dfrac{\chi M}{8\pi}\right)\, . \]
Concerning global existence, the Keller-Segel system is equipped with a free energy (entropy minus chemical potential energy) which is dissipated along the trajectories and this provides useful {\em a priori} estimates ensuring global existence for small mass. 
No such energy is known at the kinetic level.
In the present work we use a comparison principle with a singular but integrable reference function.

\subsection{Drift-diffusion limit (formal)}

The parabolic Keller-Segel system can be obtained as a drift-diffusion limit of the kinetic Othmer-Dunbar-Alt model \cite{OH02,CMPS04,CDMOSS06}, when the chemotaxis bias is  a small perturbation of an unbiased process. We may express this fact by modifying the turning kernel under consideration:
\begin{equation} 
\label{eq:turning kernel bis}
T_\epsilon[S](t,x,v,v') = F(v) + \epsilon \chi_0 \left(v\cdot \nabla S(t,x) \right)_+ \, ,
\end{equation} 
instead of \eqref{eq:turning kernel}.  Thus, the jump process consists in the superposition of a relaxation process (towards a velocity distribution $F(v)\geq0$ such that $\int vF(v)dv = 0$ and $\int  F(v) dv = 1$) and a small bias due to chemotaxis. We assume $F(v)$ to be rotationally symmetric (in order to match with the context of this paper).

\begin{remark}
Previous works (see {\em e.g.} \cite{FilbetLaurencotPerthame}) state in general that the scattering operator $\mathcal T_\epsilon[f,S](x,v) = \int_V T_\eps[S] f(v')\, dv' - \lambda_\epsilon[S] f(v)$ can be decomposed as $\mathcal T_0[f] + \epsilon \mathcal T_1[f,S]$, where the linear unbiased operator $\mathcal T_0[f]$ possesses an equilibrium configuration with respect to velocity, namely there exists a probability distribution $F(v)$ such that $\int_V vF(v) dv = 0$ and $\mathcal T_0[F] = 0$. In this paper however we restrict the presentation to the special case of relaxation towards $F$ for the sake of clarity, but the standard procedure can be performed in the same way.
\end{remark}

The kinetic model with the parabolic scaling writes
\begin{equation}\label{eq:kinmodel bis}
\left\{ \begin{array}{l}  \epsilon \displaystyle\partial_{t} f_\epsilon + v\cdot\nabla_{x} f_\epsilon = \dfrac1\epsilon   \left( \rho_\epsilon F(v) - f_\epsilon  +\epsilon \chi_0\left(v\cdot  \nabla S_\epsilon \right)_+ \rho_\epsilon  -\epsilon \chi_0  \omega |\nabla S_\epsilon| f_\epsilon \right)\, , \medskip \\
 \displaystyle - \Delta S_\epsilon + \alpha S_\epsilon  = \rho_\epsilon(t,x) \, . 
\end{array} \right.\, 
\end{equation}
Formally, as $\epsilon\to 0$, the cell density $f_\epsilon$ decouples into a product $\rho(t,x) F(v)$ (so that the leading order term cancels), and $\rho$ is to be determined. To do so, integrate against $1$ and $v$ the first line of \eqref{eq:kinmodel bis} and get respectively
\begin{align*}
&\epsilon \partial_t \rho_\epsilon + \nabla\cdot j_\epsilon = 0\, , \quad \mbox{with}\quad j_\epsilon = \int_V v f_\epsilon\, dv \, ,\\
&\epsilon \partial_t j_\epsilon + \nabla\cdot \left(\int_V v\otimes v f_\epsilon\, dv\right) = \frac1\epsilon \left( - j_\epsilon + \epsilon \chi_0 \dfrac{|V|^2}{8\pi}  \left(\nabla S_\epsilon\right)  \rho_\epsilon - \epsilon \chi_0 \omega |\nabla S_\epsilon| j_\epsilon \right)\, .
\end{align*}
Still formally, we obtain the renormalized flux for small $\epsilon$
\[ \dfrac{j_\epsilon}\epsilon  = -   \nabla\cdot\left( \int_V v\otimes v f_\epsilon \, dv\right) + \chi_0 \dfrac{|V|^2}{8\pi}  \left(\nabla S_\epsilon\right) \rho_\epsilon\, . \]
Therefore we obtain as $\epsilon$ goes to zero, the limiting parabolic equation for the cell density in space $\rho(t,x)$,
\begin{equation}
\label{eq:parabolic limit}
\partial_t\rho  = \nabla\cdot \left(  \left[\int_V v\otimes vF(v)\, dv\right] \nabla \rho \right) -  \dfrac{\chi_0|V|^2}{8\pi} \nabla\cdot\left(  \rho \nabla S    \right) \, ,
\end{equation}
coupled with the chemical potential equation $-\Delta S + \alpha S = \rho$.

Notice that the current assumptions fit with the framework of \cite{CMPS04}, which makes this analysis rigorous for short time $t<t^*$ (independent of $\epsilon$). We refer to the end of Section \ref{ddl} for a discussion about this formal limit from the viewpoint of blow-up results. In a short, we show that blow-up criterions are indeed the same (asymptotically) for the kinetic model \eqref{kinmodel} and its parabolic limit.
This raises the question whether this convergence is still valid for larger times. In other words: do the kinetic and the parabolic Keller-Segel systems remain close to each other throughout their respective periods of existence? Our result strongly supports the fact that the solutions are indeed close for all time (before the blow-up time). However, a rigorous statement together with a full proof of convergence for all time has yet to be performed.


\section{Preliminaries: local in time existence and uniqueness}\label{localex}

In this Section we prove local existence and uniqueness (without the assumption of spherical symmetry) for the system \eqref{kinmodel}. 
\begin{proposition}\label{localexthm}
 Consider the model \eqref{kinmodel} with the turning kernel given by \eqref{eq:turning kernel}. Fix $p\in (2,\infty)$
and suppose that $f_0 \in L^{1}_{x,v}\cap L^{p}_{x,v}$. Then there exists a positive
number $T$ depending only on $f_0$ and a  unique solution $f$ with 
\[f \in L^{\infty}\left([0,T] ; L^{1} \left(\R^2 \times V\right) \right)\cap 
 L^{\infty}\left([0,T] ; L^{p}\left(\R^2 \times V\right) \right)\, \]
\end{proposition}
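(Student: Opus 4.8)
The plan is to set up a Banach fixed-point argument in the space $X_T = L^\infty([0,T];L^1_{x,v}\cap L^p_{x,v})$, using the mild (Duhamel) formulation along characteristics of the free transport operator. Given a candidate density $f \in X_T$, I would first solve the elliptic equation $-\Delta S + \alpha S = \rho$ with $\rho = \int_V f\,dv$; by the elliptic estimate (Lemma \ref{lem:elliptic}) and interpolation between $L^1_x$ and $L^p_x$ for $\rho$, one controls $\|\nabla S(t)\|_{L^\infty}$ (and $\|\nabla S(t)\|_{L^r}$ for suitable $r$) by a superlinear power of $\|f(t)\|_{L^1_{x,v}\cap L^p_{x,v}}$. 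Then define the map $\Phi(f)$ to be the mild solution of $\partial_t g + v\cdot\nabla_x g = \chi_0 (v\cdot\nabla S)_+\rho - \chi_0\omega|\nabla S| g$ with data $f_0$, i.e.
\[
\Phi(f)(t,x,v) = f_0(x-tv,v)\,e^{-\chi_0\omega\int_0^t |\nabla S|(s,x-(t-s)v)\,ds} + \int_0^t e^{-\chi_0\omega\int_s^t|\nabla S|(\tau,\cdot)\,d\tau}\,\chi_0\big(v\cdot\nabla S(s)\big)_+\rho(s)\big|_{x-(t-s)v}\,ds .
\]

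Next I would estimate $\Phi(f)$ in both norms. The loss term has a nonpositive exponent, so the exponential factors are bounded by $1$ and can be discarded for upper bounds; the $L^1_{x,v}$ and $L^p_{x,v}$ norms are invariant under the free-transport shift $x\mapsto x-(t-s)v$. The source term is then bounded in $L^p_{x,v}$ by $\int_0^t \|\nabla S(s)\|_{L^\infty}\,|V|^{1/p}\,\|\rho(s)\|_{L^p_x}\,ds$ (using $\rho$ is $v$-independent so its $L^p_{x,v}$ norm is $|V|^{1/p}\|\rho\|_{L^p_x}$), and similarly in $L^1_{x,v}$; each is controlled by $T$ times a superlinear function of $\sup_{[0,T]}\|f\|_{L^1\cap L^p}$. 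Choosing the radius of the ball in $X_T$ to be, say, $2\|f_0\|_{L^1\cap L^p}$ and then $T$ small, $\Phi$ maps this ball into itself. For the contraction estimate I would take $f_1,f_2$ in the ball, note that $\rho_1-\rho_2$ and $\nabla S_1-\nabla S_2$ are controlled linearly in $\|f_1-f_2\|_{X_T}$ (with constants depending on the radius, via the elliptic estimate applied to the difference), and expand the difference $\Phi(f_1)-\Phi(f_2)$: the differences of the two exponential factors are handled by $|e^{-a}-e^{-b}|\le |a-b|$ since both exponents are nonpositive, and the remaining terms are products where one factor is a difference and the others are bounded. This yields $\|\Phi(f_1)-\Phi(f_2)\|_{X_T} \le C(R)\,T\,\|f_1-f_2\|_{X_T}$, so $\Phi$ is a contraction for $T$ small enough. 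The Banach fixed-point theorem then gives a unique fixed point, which is the desired mild solution; uniqueness in $X_T$ follows from the same contraction estimate (or a Gronwall argument directly on the difference of two solutions).

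The main obstacle is the nonlinearity $(v\cdot\nabla S)_+\rho$ together with the quasilinear loss term $|\nabla S|f$: one must verify that the elliptic-regularity-plus-interpolation chain actually closes in the $L^1\cap L^p$ scale with $p>2$ — this is exactly where $p>2$ is needed, so that $\nabla S\in L^\infty_x$ with a quantitative bound — and that the estimates are genuinely superlinear-but-locally-finite, so that smallness of $T$ (not of the data) suffices. A secondary technical point is justifying that the mild solution is well-defined and measurable along characteristics and that the nonnegativity $f\ge 0$ is preserved (immediate from the Duhamel formula since both the data and the source are nonnegative and the exponential factors are positive); these are routine but should be noted. Positivity also justifies writing $|\nabla S|f$ rather than worrying about signs in the loss term.
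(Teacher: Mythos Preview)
Your proposal is correct and follows essentially the same route as the paper: a Banach fixed-point argument in $L^\infty([0,T];L^1_{x,v}\cap L^p_{x,v})$, closing via the elliptic estimate $\|\nabla S\|_{L^\infty}\le C(p)\|\rho\|_{L^1}^{1-p'/2}\|\rho\|_{L^p}^{p'/2}$ (which is exactly where $p>2$ enters). The only cosmetic difference is that the paper places the loss term into the source and proves a Lipschitz bound on the full nonlinearity $N(f)=\chi_0(v\cdot\nabla S)_+\rho-\chi_0\omega|\nabla S|f$ directly, splitting $N(f_1)-N(f_2)$ into four pieces, whereas you keep the loss in the exponential along characteristics---this buys you nonnegativity for free but is otherwise equivalent.
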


Before going into the proof of Proposition \ref{localexthm}, let us state a useful elliptic estimate.
We omit the easy proof which is a direct consequence of the Hardy-Littlewood-Sobolev inequality.
\begin{lemma}[Elliptic estimate] \label{lem:elliptic}
Let $p>2$ and $S$ be the solution of $-\Delta S = \rho$ in the sense
\[\nabla S(x) = -\frac{1}{2\pi}\int \frac{x-y}{|x-y|^2} \rho(y)\, dy\, .\]
Then 
\[ \|\nabla S\|_\infty \leq C(p)  \norm{\rho}{L^{1}}^{1-\frac{p'}{2}} 
\norm{\rho}{L^{p}}^{\frac{p'}{2}} \, , \quad \lim_{p\to 2^+}C(p) = +\infty\, . \]
\end{lemma}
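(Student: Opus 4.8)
\textbf{Proof plan for Lemma~\ref{lem:elliptic}.}
The plan is to write $\nabla S$ explicitly through the two-dimensional Newtonian kernel and to split the convolution integral according to whether $|x-y|$ is small or large, calibrating the cut-off radius against the two norms of $\rho$ we have at our disposal. First I would fix $x\in\RR^2$ and a radius $r>0$ to be chosen later, and estimate
\[
|\nabla S(x)| \leq \frac{1}{2\pi}\int_{|x-y|<r} \frac{\rho(y)}{|x-y|}\, dy + \frac{1}{2\pi}\int_{|x-y|\geq r} \frac{\rho(y)}{|x-y|}\, dy\, .
\]
For the inner piece I would apply H\"older with exponents $p$ and $p'$: since $\rho\in L^p$, this contributes $\|\rho\|_{L^p}$ times $\big(\int_{|z|<r}|z|^{-p'}\,dz\big)^{1/p'}$, and the latter integral converges precisely because $p>2$ forces $p'<2$, giving a factor proportional to $r^{2/p'-1}=r^{1-p'/2}$. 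For the outer piece I would simply bound $|x-y|^{-1}\leq r^{-1}$ and pull it out, leaving $r^{-1}\|\rho\|_{L^1}$.

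Combining the two bounds gives $|\nabla S(x)| \leq C\big(r^{1-p'/2}\|\rho\|_{L^p} + r^{-1}\|\rho\|_{L^1}\big)$, uniformly in $x$, and it remains to optimise in $r$. Balancing the two terms forces $r^{2-p'/2}\sim \|\rho\|_{L^1}/\|\rho\|_{L^p}$, i.e. a power of $r$ with a positive exponent since $p'<2$, so a genuine minimiser exists; substituting this choice of $r$ back in produces exactly $\|\rho\|_{L^1}^{1-p'/2}\|\rho\|_{L^p}^{p'/2}$ up to a $p$-dependent constant, by checking that the exponents add up correctly (they sum to $1$, consistent with the scaling homogeneity of $\nabla S$ in $\rho$). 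The blow-up of $C(p)$ as $p\to 2^+$ is then transparent: it comes from the constant $\big(\int_{|z|<1}|z|^{-p'}\,dz\big)^{1/p'}$, whose value behaves like $(2-p')^{-1/p'}\to\infty$ as $p'\to 2^-$, i.e. as $p\to 2^+$.

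There is essentially no serious obstacle here — this is the standard Young/Hardy--Littlewood--Sobolev splitting argument — so the only thing to be careful about is bookkeeping of exponents and confirming that the optimisation in $r$ is legitimate (the function $r\mapsto r^{1-p'/2}A + r^{-1}B$ is strictly convex on $(0,\infty)$ with a unique interior minimum). Alternatively, and even more briefly, one may invoke the weak Young inequality: $|z|^{-1}$ lies in the Lorentz space $L^{2,\infty}(\RR^2)$, and the convolution $L^{2,\infty}\ast L^q \hookrightarrow L^{\infty}$ whenever $1/q + 1/2 = 1$, i.e. $q=2$; interpolating $L^2$ between $L^1$ and $L^p$ with $\|\rho\|_{L^2}\leq \|\rho\|_{L^1}^{1-p'/2}\|\rho\|_{L^p}^{p'/2}$ (note $1/2 = (1-\theta)\cdot 1 + \theta/p$ gives $\theta = p'/2$) then yields the claim at once, with the constant again degenerating as $p\to 2^+$ because the $L^{2,\infty}$–$L^2$ convolution bound is the endpoint of Young's inequality.
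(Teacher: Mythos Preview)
Your splitting argument is the right approach and is more explicit than the paper, which simply states that the lemma ``is a direct consequence of the Hardy--Littlewood--Sobolev inequality'' and omits the proof entirely. However, there is a bookkeeping slip exactly where you warned yourself to be careful: the inner integral $\big(\int_{|z|<r}|z|^{-p'}\,dz\big)^{1/p'}$ in $\RR^2$ scales like $r^{(2-p')/p'}=r^{2/p'-1}$, which is \emph{not} equal to $r^{1-p'/2}$ unless $p'=2$. With the correct exponent $\alpha=2/p'-1$, balancing $r^{\alpha}\|\rho\|_{L^p}$ against $r^{-1}\|\rho\|_{L^1}$ gives $r^{\alpha+1}=r^{2/p'}\sim \|\rho\|_{L^1}/\|\rho\|_{L^p}$, i.e.\ $r\sim(\|\rho\|_{L^1}/\|\rho\|_{L^p})^{p'/2}$, and substituting this back does yield the stated bound $\|\rho\|_{L^1}^{1-p'/2}\|\rho\|_{L^p}^{p'/2}$. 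So your final claim survives, but only after correcting the intermediate exponent; as written, your balancing relation $r^{2-p'/2}\sim\|\rho\|_{L^1}/\|\rho\|_{L^p}$ would not produce the asserted powers.

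One further caution on your alternative route: the embedding $L^{2,\infty}\ast L^2\hookrightarrow L^\infty$ is precisely the forbidden endpoint of the weak Young / Hardy--Littlewood--Sobolev inequality and fails in general, so that shortcut does not work as stated. The direct splitting argument (once the exponent is fixed) is the reliable path here.
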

Note that this elliptic estimate holds true for $p=2$ in the spherically symmetric case. However, we shall not use that variant here.

\begin{proof}[Proof of Proposition \ref{localexthm}]
Let us write the nonlinear kinetic equation of interest as
\begin{equation*}\label{localex1}
 \partial_t f + v \cdot \nabla_x f = N(f)\, ,
\end{equation*}
where the nonlinear scattering operator is given by 
\begin{equation*}\label{localex2}
 N(f)=\chi_0 \left(v \cdot \nabla S\right)_{+} \rho - 
\chi_0 \omega \abs{\nabla S}f  \, .
\end{equation*}
We shall prove that the nonlinear operator satisfies a Lipschitz estimate,
\begin{equation}\label{lip}
 \norm{N(f_1) - N(f_2)}{X}\leq L\left(\norm{f_1}{X}, \norm{f_2}{X}\right)\norm{f_1 - f_2}{X}\, ,
\end{equation}
where the norm is defined by
\begin{equation}\label{norm}
  \norm{g}{X}  =\sup_{0\leq t \leq T} \left(\norm{g(t,x,v)}{L^{1}_{x,v}} + 
 \norm{g(t,x,v)}{L^{p}_{x,v}}\right)\, .
\end{equation}
We split the difference of the nonlinear contributions into four different parts, namely, 
\begin{equation*}
 \begin{array}{rll}
 N(f_1) - N(f_2) = & \chi_0 
\left(\left(v \cdot \nabla S_1\right)_{+}
- \left(v \cdot \nabla S_2\right)_{+}\right)\rho_1  & \quad (I) \smallskip\\
&  + \chi_0 \left(v \cdot \nabla S_2\right)_{+} \left( \rho_1 - \rho_2\right)  & \quad (II) \smallskip\\
&  - \chi_0\omega\left(\abs{\nabla S_1} - \abs{\nabla S_2} \right)f_1  & \quad (III) \smallskip\\
& - \chi_0\omega \abs{\nabla S_2}\left( f_1 - f_2\right) &\quad (IV) \, .
\end{array}
\end{equation*}
For the first contribution $I$ we have
\begin{align*}
 \abs{I(t,x,v)}
&\leq C(\chi_0, V) \norm{\nabla S_1(t) -\nabla S_2(t) }{L^\infty} |\rho_1(t,x)| \, .
\end{align*}
The difference of the two gradients in $L^\infty$ can be estimated via the elliptic estimate of Lemma \ref{lem:elliptic}:
\begin{align*}
\norm{\nabla S_1(t) -\nabla S_2(t) }{L^\infty}&\leq C(p)  \norm{\rho_1(t) -\rho_2(t) }{L^{1}}^{1-\frac{p'}{2}} 
\norm{\rho_1(t) -\rho_2(t) }{L^{p}}^{\frac{p'}{2}} \, ,  \\
&\leq C(p,V) \left(\sup_{0\leq t' \leq T} \norm{f_1(t') - f_2(t') }{L^{1}_{x,v}}^{1-\frac{p'}{2}}\right)
\left(\sup_{0\leq t' \leq T}\norm{f_1(t') - f_2(t') }{L^{p}_{x,v}}^{\frac{p'}{2}}\right)\nonumber\\
&\leq C(p,V) \norm{f_1 - f_2}{X}\, .  
\end{align*}
As a consequence we get a Lipschitz condition for the first part $I$:
\begin{eqnarray*} 
 \abs{I(t,x,v)} &\leq& C(p,\chi_0, V) \norm{f_1 - f_2}{X}  |\rho_1 (t,x)| \, ,\\
 \norm{I }{L^{p}_{x,v}} &\leq& C(p,\chi_0, V) \norm{f_1 - f_2}{X}  \norm{\rho_1 }{L^{p}}\\
&\leq &C(p,\chi_0, V) \norm{f_1 - f_2}{X} \norm{f_1}{X} \, .
\end{eqnarray*}
Similarly 
\begin{equation*}\label{localex41}
 \norm{I }{L^{1}_{x,v}} \leq C(p,\chi_0, V)  \norm{f_1 - f_2}{X} \norm{f_1}{X}\, ,
\end{equation*}
therefore
\begin{equation}\label{localex42}
 \norm{I}{X} \leq C(p,\chi_0, V) \norm{f_1}{X} \norm{f_1 - f_2}{X} .
\end{equation}
The estimates for $II$, $III$ and $IV$ are obtained analogously aso that  we end-up with the desired estimate \eqref{lip}
with $L\left(\norm{f_1}{X}, \norm{f_2}{X} \right)=C(p,\chi_0, V) \left(\norm{f_1}{X} + \norm{f_2}{X}\right)$.

To conclude, let us mention that the norm $\|\cdot\|_X$ defined by \eqref{norm} is preserved through the action of the free 
transport operator, thus a fixed-point argument can be developped and leads to the conclusion.
\end{proof}

\begin{remark}
In the appendix we state a more complex existence/uniqueness result in suitable spaces $L^p_xL^q_v$. This is to fit with the comparison method of Section \ref{sec:existence}.
\end{remark}

\section{Formation of a singularity for large mass}

\label{sec:BU}

\subsection{A blow-up criterion in the case $\alpha = 0$}

We first need to state a technical Lemma for explicit computations.

\begin{lemma}[Averaged quantities] \label{lem:averaging}
Recall that $V = \mathcal B(0,R)$. 
\begin{enumerate}[(i)]
\item For any $q\in \RR^2$ we have \label{item:i}
\[ \int_{V} (v\cdot q)_+\, dv = \dfrac{2R^3}3 |q|\, . \]
\item For any $(p,q)\in \RR^2\times \RR^2$, we have \label{item:ii}
\[ \int_{V} (p\cdot v)(v\cdot q)_+\, dv = \dfrac{\pi R^4}{8} (p\cdot q )\, .  \]
\end{enumerate}
\end{lemma}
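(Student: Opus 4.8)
The plan is to reduce both identities to one-dimensional integrals using the rotational invariance of the velocity ball $V=\mathcal B(0,R)$, and then to compute in polar coordinates.

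For part (\ref{item:i}), I would first dispose of the trivial case $q=0$, for which both sides vanish. For $q\neq 0$, since the Lebesgue measure on $\RR^2$ and the disc $V$ are invariant under rotations, I would rotate the coordinates so that $q=|q|\,e_1$ with $e_1=(1,0)$. The integrand then becomes $|q|\,(v_1)_+$, and passing to polar coordinates $v=r(\cos\theta,\sin\theta)$, $dv=r\,dr\,d\theta$, the integral factorizes as
\[
\int_V(v\cdot q)_+\,dv=|q|\left(\int_0^R r^2\,dr\right)\left(\int_{-\pi/2}^{\pi/2}\cos\theta\,d\theta\right)=|q|\cdot\frac{R^3}{3}\cdot 2=\frac{2R^3}{3}|q|\, .
\]

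For part (\ref{item:ii}), the same rotation $q\mapsto|q|\,e_1$ is used, and I would note that the scalar $p\cdot q=|q|\,p_1$ is preserved by it. Expanding $(p\cdot v)(v\cdot q)_+=|q|\,(p_1v_1+p_2v_2)\,v_1\,\mathbbm{1}_{\{v_1>0\}}$, the cross term $\int_V v_1v_2\,\mathbbm{1}_{\{v_1>0\}}\,dv$ vanishes by oddness in $v_2$ (for each admissible value of $v_1$ the $v_2$-slice is an interval symmetric about the origin). The surviving term is computed in polar coordinates:
\[
|q|\,p_1\int_V v_1^2\,\mathbbm{1}_{\{v_1>0\}}\,dv=|q|\,p_1\left(\int_0^R r^3\,dr\right)\left(\int_{-\pi/2}^{\pi/2}\cos^2\theta\,d\theta\right)=|q|\,p_1\cdot\frac{R^4}{4}\cdot\frac{\pi}{2}=\frac{\pi R^4}{8}(p\cdot q)\, .
\]

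The computation is entirely elementary, so I do not anticipate any genuine obstacle; the only point deserving an explicit remark is the legitimacy of the rotation, which follows from the rotational invariance of both $V$ and the Lebesgue measure, together with the fact that the right-hand sides depend on $p$ and $q$ only through the rotation-invariant quantities $|q|$ and $p\cdot q$.
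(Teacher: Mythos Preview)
Your proof is correct. The computations are sound, and the only minor notational point is that after rotating so that $q\mapsto |q|\,e_1$, the vector $p$ should also be replaced by its rotated image $\tilde p$; since $|q|\,\tilde p_1=\tilde p\cdot(|q|\,e_1)=p\cdot q$, the final answer is unaffected, exactly as you claim.

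The paper takes a somewhat different route for item~(\ref{item:ii}). Rather than rotating $q$ to a fixed axis and computing directly, it defines $J(p,q)=\int_V(p\cdot v)(v\cdot q)_+\,dv$ and first argues abstractly that $J$ is a symmetric bilinear form: linearity in $p$ is obvious, and symmetry is obtained by splitting $(p\cdot v)=(p\cdot v)_+-(p\cdot v)_-$ and using the change of variable $v\mapsto -v$. Then it suffices to compute the associated quadratic form $J(p,p)=\tfrac12\,p^T\bigl(\int_V v\otimes v\,dv\bigr)p$, and isotropy gives $\int_V v\otimes v\,dv=\tfrac{\pi R^4}{4}\Id$. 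Your approach is more hands-on and arguably more elementary, while the paper's argument is slightly more structural: it separates the algebraic fact (symmetric bilinear) from the single scalar computation (the trace of the second-moment tensor). Both lead to the same one-line polar integral at the end.
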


\begin{proof}
Item (\ref{item:i}) is immediate.
\\
Concerning item (\ref{item:ii}),
denote $J(p,q) = \int_{v\in V} (p\cdot v)(v\cdot q)_+\, dv$, then $J$ is symmetric:
\begin{eqnarray*}
 J(p,q) &=& \int_v (p\cdot v)_+(v\cdot q)_+\, dv - \int_v (p\cdot v)_-(v\cdot q)_+\, dv \\
& = & \int_v (p\cdot v)_+(v\cdot q)_+\, dv - \int_w (- p\cdot w)_-(-w\cdot q)_+\, dw \\
& = & \int_v (p\cdot v)_+(v\cdot q) \, dv\, .
\end{eqnarray*}
 Moreover, $J(p,q)$ is linear w.r.t. $p$, so it is bilinear w.r.t. $(p,q)$. It remains to compute the associated quadratic form:
\begin{eqnarray*}
 J(p,p) & = & \int_v (p\cdot v) (v\cdot p)_+\, dv \\
& = & \frac12 \int_v (p\cdot v) (v\cdot p)\, dv \\
& = & \frac 12 p^T \left\{ \int_{v} v\otimes v\, dv \right\} p\, .
\end{eqnarray*}
Thanks to isotropy we obtain, 
\begin{eqnarray} \int_{v} v\otimes v\, dv & = & 
\left\{\int_{r=0}^R \int_{\theta = 0}^{2\pi} r^2 \cos^2 \theta \,  r  dr d\theta \right\} \Id \label{eq:isotropy}\\
&= & \pi \dfrac{R^4}4 \Id \nonumber\, .
\end{eqnarray}
Consequently we deduce
\[ J(p,q) = \dfrac{\pi R^4}{8} (p\cdot q)\, . \]

\end{proof}

\begin{proof}[Proof of Theorem \ref{thm:BU}]
We plan to evaluate explicitly the time evolution of the second moment w.r.t. to space variable $x$. 
We introduce the notation,
\[ I(t) = \frac 12 \iint_{\RR^2\times V} |x|^2 f(t,x,v)\, dvdx \, ,\] 
We differentiate  twice in time:
\begin{eqnarray*}
 \frac d{dt} I(t) & = & \int_x \int_v (x\cdot v) f(t,x,v)\, dv dx \\ 
 &&\quad + \frac12 \int_x |x|^2 \left\{\int_v \int_{v'} T[S](t,x,v,v') f(t,x,v') dv'dv - \int_v \int_{v'} T[S](t,x,v',v) 
 f(t,x,v)\, dv'dv \right\}\, dx \\
& = & \int_x \int_v (x\cdot v) f(t,x,v)\, dv dx \, ,\\
  \frac {d^2}{dt^2} I(t) & = & \int_x\int_v |v|^2 f(t,x,v)\, dx dv  \\
&& + \int_x \int _v \int_{v'} (x\cdot v) T[S](t,x,v,v') f(t,x,v') \, dv' dv dx - \int_x\int_v  (x\cdot v) \lambda[S](t,x,v) 
f(t,x,v)\, dv dx\, .
\end{eqnarray*}
With the particular choice for $T[S]$ given by \eqref{eq:turning kernel} (it does not depend on the anterior velocity $v'$), we 
obtain:
\begin{eqnarray*}
  \frac {d^2}{dt^2} I(t) & = & \int_x\int_v |v|^2 f(t,x,v)\, dv dx  \\
&& + \chi_0 \int_x \int _v  (x\cdot v) (v\cdot \nabla S)_+ \rho(t,x) \,  dv dx - \chi_0 \int_x\int_v  (x\cdot v) 
\left\{\int_{v'} (v'\cdot \nabla S)_+ \, dv'\right\} f(t,x,v)\, dv dx\, .
\end{eqnarray*}
Therefore, applying Lemma \ref{lem:averaging}, we get 
\begin{equation} \label{eq:twice}
 \frac {d^2}{dt^2} I(t) = \int_x\int_v |v|^2 f(t,x,v)\, dx dv   + \chi_0 \dfrac{\pi R^4}{8} \int_x  x\cdot \nabla S(t,x)  
 \rho(t,x) \, dx - \chi_0 \dfrac{2R^3}3 \int_x\int_v  (x\cdot v) |\nabla S|(t,x)   f(t,x,v)\, dv dx\, .
\end{equation}
The following computation is well-known within the theory of the Keller-Segel system \cite{BDP06}:
\begin{eqnarray*}
 \int_x  x\cdot \nabla S(t,x)  \rho(t,x) \, dx & = & - \frac1{2\pi} \int_x \int_y x\cdot\dfrac{x-y}{|x-y|^2} \rho(t,y)\rho(t,x)\, 
 dy dx  \\
& = & - \frac1{4\pi} \int_x \int_y (x-y)\cdot\dfrac{x-y}{|x-y|^2} \rho(t,y)\rho(t,x)\, dy dx  \\
& = & -\frac{M^2}{4\pi}\, . 
\end{eqnarray*}
Therefore we obtain from \eqref{eq:twice},
\begin{eqnarray}
 \frac {d^2}{dt^2} I(t) &\leq& R^2 M    -  \dfrac{\chi_0 R^4}{32} M^2  - \chi_0 \dfrac{2R^3}3 \int_x\int_v  (x\cdot v) |\nabla 
 S|(t,x)   f(t,x,v)\, dv dx \nonumber\\
& \leq & R^2 M\left(1 - \dfrac{\chi_0 R^2 M}{32}\right) - \chi_0 \dfrac{2R^3}{3} \int_x x\cdot j(t,x) |\nabla S|(t,x)  \, dx\, , 
\label{eq:virial}
\end{eqnarray}
where the current $j(t,x) = \int_v vf(t,x,v)\, dv $ satisfies
\[  \partial_t \rho + \nabla\cdot j = 0\, . \]
Introduce the notation,
\begin{equation} \delta = R^2M\left(\dfrac{\chi_0 R^2 M}{32} -1\right)\, , \label{eq:delta def} \end{equation}
which is positive by assumption \eqref{eq:small mass BU} of Theorem \ref{thm:BU}.

In spherical coordinates, we can compute exactly the  contribution of the remaining term in \eqref{eq:virial}, using the following identities (see Appendix):
\begin{equation*}
\begin{array}{l@{\qquad}l}
\displaystyle|\nabla S|(t,x) = |S'(t,r)|\, , 
& \displaystyle r|S'(t,r)| =  \int_{\lambda =0}^r \lambda \rho(t,\lambda)\, d\lambda\, , \medskip \\
\displaystyle j(t,x) =  j^\parallel(t,r)\frac x{r} + j^\perp(t,r)\frac {x^\perp}{r}  \, ,
& \displaystyle r j^\parallel(t,r) = - \frac \partial{\partial t} \int_{\lambda =0}^r \lambda \rho(t,\lambda)\, d\lambda = \frac \partial{\partial t} \int_{\lambda =r}^\infty \lambda \rho(t,\lambda)\, d\lambda \, .
\end{array}
\end{equation*}
Therefore, under the hypothesis of spherical symmetry we get
\begin{eqnarray*}
-  \int_{\RR^2} x\cdot j(t,x) |\nabla S|(t,x)  \, dx  &=& - 2\pi \int_{r=0}^\infty  r   j^\parallel(t,r)  |S'(t,r)|\, r dr \\
& = & -2\pi  \int_{r=0}^\infty  \frac \partial {\partial t}\left(\int_{\lambda =r}^\infty \lambda \rho(t,\lambda)\, d\lambda \right)  \left(\frac M{2\pi} - \int_{\lambda =r}^\infty \lambda \rho(t,\lambda)\, d\lambda \right)\,  dr \\
& = & -  M  \frac d{dt} \int_{r=0}^\infty  \int_{\lambda =r}^\infty \lambda \rho(t,\lambda)d\lambda \,  dr +  \pi \frac d{dt} \int_{r=0}^\infty \left(\int_{\lambda =r}^\infty \lambda \rho(t,\lambda)\, d\lambda \right)^2\, dr\, .
\end{eqnarray*}
Integrating once in time the inequality \eqref{eq:virial} leads to
\[
\dfrac d{dt} I(t)  \leq \dfrac{d}{dt} I(t)\Big| _{  t=0} - \delta t  +  \chi_0 \dfrac{2R^3}{3}\left(K(0)-  K(t)\right)\, , 
\]
where $K(t)$ is defined by
\begin{eqnarray*} K(t) &=&    M   \int_{r=0}^\infty  \int_{\lambda =r}^\infty \lambda \rho(t,\lambda) \,  d\lambda dr -  \pi \int_{r=0}^\infty \left(\int_{\lambda =r}^\infty \lambda \rho(t,\lambda)\, d\lambda \right)^2\, dr \\
& = &   \frac M2   \int_{r=0}^\infty  \int_{\lambda =r}^\infty \lambda \rho(t,\lambda)\,d\lambda  dr + \pi \int_{r=0}^\infty \left(\frac M{2\pi} - \int_{\lambda =r}^\infty \lambda \rho(t,\lambda)\, d\lambda  \right)\left(\int_{\lambda =r}^\infty \lambda \rho(t,\lambda)\, d\lambda \right) \, dr \, .
\end{eqnarray*}
Thus $K(t)$ is obviously a nonnegative quantity. It is worth noticing that $K(t)$ is finite provided that the density $\rho$ has a finite second moment. Indeed we have 
\begin{eqnarray*}  K(t)  
& \leq & M \int_{r=0}^\infty \int_{\lambda =r}^\infty \lambda \rho(t,\lambda)\, d\lambda  dr \\
& = & M \int_{\lambda = 0}^\infty \lambda^2 \rho(t,\lambda)\, d\lambda\\
& \leq & \dfrac M{2\pi}  \int_{\RR^2} |x|\rho(t,x)\, dx\, .
\end{eqnarray*}
We deduce in particular using the Cauchy-Schwarz inequality,
\begin{equation} \label{eq:K(t)}
K(t)\leq \dfrac1{2\pi}M^{3/2} \sqrt{2I(t)}\, .
\end{equation}

\begin{remark}
 It seems surprising that the contribution of the loss term to the virial identity can be written as the derivative of a 
 nonpositive quantity. It is not surprising however if we notice that in spherical coordinates, this contribution is the scalar 
 product between $r j^\parallel(t,r)$ and $r S'(t,r)$ satisfying respectively
\begin{equation*}
\left\{\begin{array}{l}
\displaystyle \dfrac \partial{\partial r}\big( r j^\parallel(t,r)\big) = - \dfrac \partial{\partial t}\big( r\rho(t,r)\big)\, , \medskip \\
\displaystyle \dfrac \partial{\partial r} \big(r S'(t,r)\big) =   r\rho(t,r) \, .
\end{array}\right.
\end{equation*}
On the other hand this property still holds true when the turning kernel contains a linear part (see \eqref{eq:derivative of 
I(t)}).
\end{remark}

We end up eventually with
\begin{equation} 
\label{eq:integrate virial}
\dfrac d{dt} I(t) \leq \iint_{\RR^2\times V}  (x\cdot v) f_0(x,v)\, dv dx- \delta t + \chi_0 \dfrac{2R^3}3 K(0) \, .  
\end{equation}
This proves that the second moment formally vanishes in finite time. therefore a singularity necessarily forms before this time, otherwise it would contradict local existence stated in the Appendix.
\end{proof}

\subsubsection*{The case of $V = \Sph(0,R)$}
In the case where the set of admissible velocities is the sphere of radius $R$, we can adapt the proof above to demonstrate 
that blow-up occurs if 
\begin{equation} \tilde\delta = R^2M\left(\dfrac{\chi_0 R M}{8} -1\right)\, , \label{eq:delta sphere} \end{equation}
is a positive quantity.
The important modification arises in the constants evaluated in Lemma \ref{lem:averaging}. We can adapt the computations
in that Lemma  
to obtain that for any $(p,q)\in \RR^2\times \RR^2$, we have
\[ \int_{V} (p\cdot v)(v\cdot q)_+\, dv = \dfrac{\pi R^3}{2} (p\cdot q )\, .  \]
As a consequence, we get that the key differential virial inequality \eqref{eq:integrate virial} becomes in this new setting,
\begin{equation} 
\label{eq:integrate virial bis}
\dfrac d{dt} I(t) \leq \iint_{\RR^2\times V}  (x\cdot v) f_0(x,v)\, dv dx- \tilde \delta t + \chi_0 {2R^2}K(0) \, ,  
\end{equation}
This concludes the adaptation to the case of the sphere.

\subsection{Blow-up including chemical degradation ($\alpha>0$)}

It is now classical that for results concerning blow-up in the parabolic Keller-Segel system, the additional contribution of chemical degradation
\[ -\Delta S + \alpha S = \rho\, , \]
does not change dramatically the flavor of the results. It affects the blow-up criterion, however, and so it does in our situation.

In order to study the influence of the chemical degradation, we shall estimate carefully the size of the corrective terms that 
come from the difference between the Poisson kernel and the Bessel kernel, respectively
\begin{equation} 
 \label{eq:kernels}
B_0(z) = \dfrac{1}{2\pi}  \log \dfrac 1{ |z|}\, , \quad B_\alpha(z) = \dfrac1{4\pi}\int_{t=0}^{+\infty}\dfrac 1t\  e^{-\frac{|z|^2}{4t} - \alpha t}\ dt\, .
\end{equation}

\begin{lemma}
\label{lem:error kernel}
There exists a universal constant $\mathcal C$ such that 
\begin{equation}
\|\nabla B_\alpha  - \nabla B_0 \|_\infty\leq \sqrt{\alpha} \mathcal C\, . 
\end{equation}
\end{lemma}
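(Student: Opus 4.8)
The plan is to work directly with the explicit integral representations of $B_0$ and $B_\alpha$ given in \eqref{eq:kernels}, differentiate them in $z$, and estimate the difference of the two gradients uniformly. First I would record that, differentiating under the integral sign,
\[
\nabla B_\alpha(z) = -\dfrac{z}{8\pi}\int_{0}^{\infty}\dfrac{1}{t^2}\, e^{-\frac{|z|^2}{4t}-\alpha t}\, dt\, ,\qquad \nabla B_0(z) = -\dfrac{z}{2\pi |z|^2}\, ,
\]
and observe that the same calculation with $\alpha=0$ gives $\nabla B_0(z) = -\frac{z}{8\pi}\int_0^\infty t^{-2} e^{-|z|^2/(4t)}\,dt$ (the integral being elementary after the substitution $s=|z|^2/(4t)$). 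Hence
\[
\nabla B_\alpha(z) - \nabla B_0(z) = -\dfrac{z}{8\pi}\int_{0}^{\infty}\dfrac{1}{t^2}\, e^{-\frac{|z|^2}{4t}}\left(e^{-\alpha t}-1\right)dt\, .
\]
This reduces everything to bounding the scalar quantity $|z|\int_0^\infty t^{-2} e^{-|z|^2/(4t)} (1-e^{-\alpha t})\, dt$ by a universal constant times $\sqrt{\alpha}$.

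Next I would estimate the remaining integral using $0\le 1-e^{-\alpha t}\le \min(1,\alpha t)$ and the substitution $s = |z|^2/(4t)$, i.e. $t = |z|^2/(4s)$, $dt = -|z|^2/(4s^2)\,ds$, which turns $t^{-2}\,dt$ into $-4\,ds/|z|^2$. One gets
\[
|z|\int_{0}^{\infty}\dfrac{1}{t^2}\, e^{-\frac{|z|^2}{4t}}\left(1-e^{-\alpha t}\right)dt = \dfrac{4}{|z|}\int_{0}^{\infty} e^{-s}\left(1-e^{-\frac{\alpha |z|^2}{4s}}\right)ds\, .
\]
Writing $a = \tfrac{1}{4}\alpha |z|^2$ and using $1-e^{-a/s}\le \min(1, a/s)$, I would split the $s$-integral at $s=\sqrt a$: for $s\le \sqrt a$ bound the bracket by $1$, contributing $\le \sqrt a$; for $s\ge \sqrt a$ bound it by $a/s$ and use $\int_{\sqrt a}^\infty e^{-s} s^{-1}\,ds \le \int_{\sqrt a}^\infty e^{-s}\,ds / \sqrt a \le 1/\sqrt a$ (or simply $\le a^{-1/2}$ after a crude bound), contributing $\le \sqrt a$ as well. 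Thus the whole expression is $\le \dfrac{4}{|z|}\cdot C\sqrt a = \dfrac{4C}{|z|}\cdot \tfrac12 |z|\sqrt\alpha = 2C\sqrt\alpha$, and dividing by $8\pi$ gives the claimed bound with a universal constant $\mathcal C$.

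I do not anticipate a serious obstacle here; the only points requiring a little care are justifying differentiation under the integral sign (routine, by dominated convergence since the differentiated integrands are dominated locally uniformly in $z\neq 0$) and choosing the splitting point in the last integral so that both pieces scale like $\sqrt a$. The key structural observation that makes it work is that the heat-kernel representation of $B_0$ lets one write the \emph{difference} $\nabla B_\alpha-\nabla B_0$ as a single clean integral with the factor $(e^{-\alpha t}-1)$, after which the estimate is a one-variable scaling argument; the factor $\sqrt\alpha$ emerges because the natural scale at which $1-e^{-\alpha t}$ becomes order one is $t\sim 1/\alpha$, corresponding to $s\sim \sqrt\alpha |z|$.
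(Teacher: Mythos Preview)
Your argument is correct and complete; the splitting at $s=\sqrt a$ works exactly as you describe, and the factor $\sqrt\alpha$ comes out cleanly. One small remark: the heat-kernel formula for $B_\alpha$ in \eqref{eq:kernels} diverges when $\alpha=0$, so the identity $\nabla B_0(z)=-\tfrac{z}{8\pi}\int_0^\infty t^{-2}e^{-|z|^2/(4t)}\,dt$ cannot literally be obtained by ``the same calculation with $\alpha=0$''. You do handle this correctly by verifying the integral directly against the known formula $\nabla B_0(z)=-z/(2\pi|z|^2)$, but it is worth saying explicitly that this is what you are doing.

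Your route is genuinely different from the paper's. The paper works on the Fourier side: since $\hat B_\alpha(\xi)=(\alpha+|\xi|^2)^{-1}$, one has
\[
\|\nabla B_\alpha-\nabla B_0\|_\infty\le \dfrac{1}{2\pi}\int_{\RR^2}|\xi|\left(\dfrac{1}{|\xi|^2}-\dfrac{1}{\alpha+|\xi|^2}\right)d\xi
= \dfrac{1}{2\pi}\int_{\RR^2}\dfrac{\alpha}{|\xi|(\alpha+|\xi|^2)}\,d\xi,
\]
and the substitution $\xi=\sqrt\alpha\,\zeta$ immediately yields the factor $\sqrt\alpha$. The Fourier argument is shorter and makes the scaling transparent in one line, at the price of invoking the Fourier representation of the Bessel kernel and the Hausdorff--Young type bound $\|g\|_\infty\le (2\pi)^{-1}\|\hat g\|_1$. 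Your real-space argument is slightly longer but entirely self-contained, uses only the integral representation already given in \eqref{eq:kernels}, and even produces an explicit constant.
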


\begin{remark}
Observe that the scaling $\sqrt{\alpha}$ comes naturally from knowing the estimate between $B_1$ and $B_0$.
\end{remark}

\begin{proof}
We give an argument in Fourier space. In fact $\hat{B}_\alpha(\xi) = (\alpha + |\xi|^2)^{-1}$, and we get:
\begin{eqnarray*} 
\| \nabla B_\alpha  - \nabla B_0 \|_\infty &\leq& \dfrac1{2\pi} \left\| \widehat{\nabla B}_\alpha  -  \widehat{\nabla B}_0 \right\|_1 \\
&\leq & \dfrac1{2\pi} \int_{\RR^2} |\xi| \left( \dfrac1{ |\xi|^2} - \dfrac1{\alpha +|\xi|^2} \right) \, d\xi \\
&\leq & \dfrac1{2\pi} \int_{\RR^2}   \dfrac{\alpha }{|\xi|\left(\alpha + |\xi|^2 \right)} \, d\xi \\
&\leq & \dfrac{\sqrt{\alpha}}{2\pi} \int_{\RR^2}   \dfrac{1 }{|\zeta|\left(1 + |\zeta|^2 \right)} \, d\zeta\, .
\end{eqnarray*}

\end{proof}

\begin{proof}[Proof of Corollary \ref{cor:BU}] Following 
\eqref{eq:twice} and subsequent lines we are able to obtain a perturbated virial identity for 
the second space moment of the density:
\begin{equation}
\begin{split}
 \frac {d^2}{dt^2} I(t) = \int_x\int_v |v|^2 f(t,x,v)\, dx dv   + \chi_0 \dfrac{\pi R^4}{8} \int_x  x\cdot \nabla  \tilde S(t,x)  
 \rho(t,x) \, dx - \chi_0 \dfrac{2R^3}3 \int_x\int_v  (x\cdot v) |\nabla  \tilde S|(t,x)   f(t,x,v)\, dv dx \\
 +  \chi_0 \dfrac{\pi R^4}{8} M \int_x  x\cdot \left(  \nabla    S(t,x) -  \nabla  \tilde S(t,x) \right) \rho(t,x) \, dx - \chi_0 
 \dfrac{2R^3}3 \int_x\int_v  (x\cdot v) \left(  |\nabla   S|(t,x) -  |\nabla  \tilde S|(t,x) \right)  f(t,x,v)\, dv dx \, ,
\end{split}
\label{eq:perturbated twice}
\end{equation}
where $\nabla \tilde S$ is defined by $-\Delta \tilde S = \rho$ as above (cf. the case $\alpha = 0$).
The first line of \eqref{eq:perturbated twice} can be explicitly computed as before. It remains to estimate the error terms (on the second line) using Lemma \ref{lem:error kernel}. In fact we have using Young's inequality:
\begin{eqnarray*}
 \left| \chi_0 \dfrac{\pi R^4}{8} \int_x  x\cdot \left(  \nabla    S(t,x) -  \nabla  \tilde S(t,x) \right) \rho(t,x) \, dx \right| & \leq & \chi_0 M \dfrac{\pi R^4}8  \int_x |x| \left\| \nabla B_\alpha - \nabla B_0 \right\|_\infty \rho(t,x)\, dx \\
&\leq&  \sqrt{\alpha}\chi_0 M \dfrac{\pi R^4}8 \mathcal{C} \int_x |x| \rho(x)\, dx
\\
&\leq & \sqrt{\alpha} \chi_0 M \dfrac{\pi R^4}8 \mathcal{C} \sqrt{M} \sqrt{2 I(t)}\, .
\end{eqnarray*}
In the same way we get,
\begin{eqnarray*}
 \left| \chi_0 \dfrac{2R^3}3 \int_x\int_v  (x\cdot v) \left(  |\nabla   S|(t,x) -  |\nabla  \tilde S|(t,x) \right)  f(t,x,v)\, dv dx \right| 
& \leq & \chi_0 \dfrac{2R^4}3   \int_x |x| \left| \nabla   S(t,x) -  \nabla \tilde S(t,x) \right| \rho(t,x)\, dx \\
&\leq&  \sqrt{\alpha}\chi_0 M \dfrac{2R^4}3 \mathcal{C} \int_x |x| \rho(x)\, dx
\\
&\leq & \sqrt{\alpha} \chi_0 M \dfrac{2R^4}3 \mathcal{C}  \sqrt{M} \sqrt{2 I(t)}\, .
\end{eqnarray*}
Therefore we end up with the following integro-differential inequality instead of \eqref{eq:integrate virial}:
\begin{equation}
\label{eq:corrected virial}
\dfrac d{dt} I(t) \leq \int_x\int_v (x\cdot v) f_0(x,v)\, dv dx  - \delta t+\chi_0 \dfrac{2R^3}3 K(0) + 
\sqrt{\alpha} \chi_0 M^{3/2}R^4 \mathcal{C} \int_{\tau=0}^t \sqrt{I(\tau)}\, d\tau \, .
\end{equation}
Recall that $\delta>0$ is defined in \eqref{eq:delta def}. The universal constant $\mathcal{C}$ is now fixed for the rest of this 
proof. Denote 
\begin{align*}
& \mu_0 =  \int_x \int_v (x\cdot v) f_0(x,v)\, dv dx + \chi_0 \dfrac{2R^3}3 K(0) \, ,  \\
& \eta = \sqrt{ \alpha}\chi_0 M^{3/2}R^4 \mathcal{C} \, . 
\end{align*}
Integrating once more and inverting the order of integration we get
\begin{align*}
 I(t)&\leq I(0) + \mu_0 t - \frac{\delta}{2}t^2  + \eta \int_{s=0}^{t}\int_{\tau = 0}^{s}\sqrt{I(\tau)}\, d \tau ds\\
&\leq I(0) + \mu_0 t - \frac{\delta}{2}t^2  + \eta \int_{\tau=0}^{t}(t-\tau)\sqrt{I(\tau)}\, d \tau \\
&\leq I(0) + \mu_0 t - \frac{\delta}{2}t^2  + \eta  \int_{\tau=0}^{t}\left(\frac{\eps}{2}(t-\tau)^2 + \frac{1}{2\eps}I(\tau) 
\right)\, d \tau \\
&\leq I(0) + \mu_0 t - \frac{\delta}{2} t^2 + \frac{\eta\eps}{6} t^3 + \frac{\eta}{2\eps}
\int_{\tau=0}^{t} I(\tau)\, d \tau\, .\label{new4}
\end{align*}
We obtain as a consequence,
\begin{equation*}
\int_{\tau=0}^t I(\tau)\, d\tau \leq e^{\eta t/2\eps} \int_{\tau=0}^t e^{-\eta \tau/2\eps}\left( I(0) + \mu_0 \tau - 
\frac{\delta}{2} \tau^2 + \frac{\eta\eps}{6} \tau^3  \right)\, d\tau\, . 
\end{equation*}
A sufficient condition for a contradiction to occur is that the right-hand side becomes negative as $t\to\infty$. For this 
purpose, compute
\begin{eqnarray*}
\dfrac{ \eta}{2\eps} \int_{\tau=0}^\infty e^{-\eta \tau/2\eps}\left( I(0) + \mu_0 \tau - \frac{\delta}{2} \tau^2 + 
\frac{\eta\eps}{6} \tau^3  \right)\, d\tau & = &  \int_{\sigma=0}^\infty e^{-\sigma}  \left( I(0) + \dfrac{2\eps\mu_0}{\eta} 
\sigma - \frac{2\eps^2\delta}{\eta^2} \sigma^2 + \frac{4 \eps^4}{3\eta^2} \sigma^3  \right)\, d\sigma \\
 & = & I(0) + \dfrac{2\eps\mu_0}{\eta} - \frac{4\eps^2\delta}{\eta^2} + \frac{8 \eps^4}{\eta^2} \, .
\end{eqnarray*}
Choose for instance $\eps = \sqrt{\delta}/2$. Then the quantity $\int_{\tau=0}^t I(\tau) d\tau$ eventually vanishes if the following condition is fulfilled:
\begin{equation}
\label{eq:suff. cond.}
I(0) + \dfrac{ \sqrt{\delta}\mu_0}{\eta} < \dfrac{\delta^2}{2\eta^2}\, . 
\end{equation}

To conclude the proof of Corollary \ref{cor:BU}, observe that, resulting from \eqref{eq:K(t)} we get:
\begin{equation*}
\mu_0 \leq R \sqrt{M} \sqrt{2I(0)} + \dfrac{\chi_0 M^{3/2}R^3}{3\pi} \sqrt{2I(0)}\, ,
\end{equation*}
so that the necessary condition  \eqref{eq:suff. cond.} might be replaced by the stronger criterion:
\begin{equation*}
\eta^2 \dfrac{2I(0)}{\delta} + 2\left( R\sqrt{M} + \dfrac{\chi_0 M^{3/2}R^3}{3\pi}\right)\sqrt{\eta^2 \dfrac{2I(0)}{\delta}} < \delta \, . 
\end{equation*} This criterion can be read as $X + 2A\sqrt{X}<\delta$, which is equivalent to $\sqrt{X}<\sqrt{\delta + A^2} - A$.
Therefore, the following criterion is a necessary condition for solutions to blow-up after finite time,
\begin{align*}
&\eta^2 \dfrac{2I(0)}{\delta}<  A^2\left(  \dfrac{\delta}{A^2} + 2 - 2\sqrt{1+ \dfrac{\delta}{A^2} } \right)\, ,\\
& A = R\sqrt{M} + \dfrac{\chi_0 M^{3/2}R^3}{3\pi}\, .
\end{align*}
\end{proof}

\subsection{Drift-diffusion limit and blow-up (case $\alpha=0$)}\label{ddl}


In the Introduction, we derive formally the parabolic Keller-Segel system from the kinetic system with suitable turning kernel \eqref{eq:turning kernel bis}.

Repeating the  virial computation in this case leads to 
\begin{eqnarray*}
\epsilon^2 \frac {d^2}{dt^2} I_\epsilon(t) & = &\int_x\int_v |v|^2 f_\epsilon(t,x,v)\, dvdx + \dfrac1\epsilon \int_x\int_v (x\cdot v) \rho_\epsilon(t,x) F(v)\, dv dx - \dfrac1\epsilon \int_x  x\cdot j_\epsilon(t,x)\, dx  -  \dfrac{\chi_0 R^4}{32} M^2 - \chi_0 \dfrac{2R^3}3\dfrac{d}{dt} K_\epsilon(t) \\
&= & \int_x\int_v |v|^2 f_\epsilon(t,x,v)\, dvdx -  \dfrac{\chi_0 R^4}{32} M^2 - \dfrac1{2\epsilon} \int_x\left( \nabla |x|^2\right) j_\epsilon(t,x)\, dx - \chi_0 \dfrac{2R^3}3 \dfrac{d}{dt} K_\epsilon(t) \, ,
\end{eqnarray*}
because $\int_V vF(v)\, dv = 0$ by assumption. To conclude as above that the remaining term is the derivative of a nonpositive quantity, observe that 
\begin{eqnarray}  
-   \int_{\RR^2}\left( \nabla |x|^2\right) j_\epsilon(t,x)\, dx &=& \int_{\RR^2} |x|^2 \nabla\cdot j_\epsilon(t,x)\, dx \nonumber \\
& = & - \epsilon \int_{\RR^2} |x|^2 \partial_t \rho_\epsilon(t,x)\, dx \nonumber\\
& = & - \epsilon \dfrac d{dt} \int_{\RR^2} |x|^2 \rho_\epsilon(t,x)\, dx\, . \label{eq:derivative of I(t)}
\end{eqnarray}
Integrating once in time we obtain
\begin{equation} \epsilon^2 \dfrac d{dt} I_\epsilon(t) \leq \epsilon \int_x\int_v (x\cdot v)f_\epsilon(x,v)\, dv dx + R^2 M\left(1 - \dfrac{\chi_0 R^2 M}{32}\right) t +  I_\epsilon(0)-I_\epsilon(t) + \chi_0 \dfrac{2R^3}3\left(K_\epsilon(0) - K_\epsilon(t)\right)\, .   \label{eq:parabolic BU}\end{equation}
Arguing as before, we conclude that the solution blows-up in finite time under the same assumption as Theorem \ref{thm:BU}. 

On the other hand, we know precisely the blow-up criterion for the parabolic limit. It depends upon the choice of the relaxation function $F(v)$. This function can vary between two extremal choices: a Dirac mass at zero, and a Dirac mass spread on the sphere $\{|v|= R\}$.   Consider for example $F(v)\equiv \dfrac1{|V|}\mathbf{1}_V$. The parabolic limit \eqref{eq:parabolic limit} writes 
\[ \partial_t\rho  = \nabla\cdot \left(  \dfrac{R^2}{4} \nabla \rho \right) -  \dfrac{\chi_0\pi R^4 }{8} \nabla\cdot\left(  \rho \nabla S    \right) \, ,
\]
thanks to isotropy \eqref{eq:isotropy}. Thus the blow-up criterion for finite-time blow-up reads 
\[ M>\dfrac{ 16  }{\chi_0 R^2}\, , \]
which differs from the kinetic criterion \eqref{eq:small mass BU} by a factor $2$ (the two criteria actually match in the case 
where $F(v)$ is the normalized Dirac mass on the sphere of radius $R$). To fill this gap it is necessary to reconsider the 
parabolic limit a $\epsilon \to 0$. In fact the positive contribution $R^2M$ in \eqref{eq:parabolic BU} comes from the 
upper-bound of $\iint_{\RR^2\times V} |v|^2 f_\epsilon(tx,v)dvdx$. One may notice that for small $\epsilon$, $f_\epsilon(t,x,v)$ 
gets close to $\rho(t,x)F(v)$, and this upper-approximation is not sharp (except in the case where $F(v)$ is especially the 
Dirac mass on the sphere of radius $R$). Therefore we may replace $\iint_{\RR^2\times V} |v|^2 f_\epsilon(t,x,v)dvdx$ by $ 
M\int_V |v|^2F(v)\, dv  $. To finish with, observe that the diffusion tensor at the parabolic limit can be calculated due to 
rotational invariance, 
\[  \int_V v\otimes vF(v)\, dv = \dfrac12 \left(\int_V |v|^2 F(v)\, dv\right) \Id\, . \]
Under these considerations, the kinetic and the parabolic criterions do coincide.


\section{Global existence for small mass}
\label{sec:existence}

The aim of this section is to prove global existence for the kinetic model \eqref{kinmodel} in the spherically symmetric case 
under the small mass condition stated in Theorem \ref{thm:existence}.

Spherical symmetry is used in a crucial way in the following estimate:
\begin{equation}\label{eq:gradient estimate} (v\cdot \nabla S)_+ = |S'(r)| \left(v\cdot \dfrac{x}{|x|}\right)_-\leq \left(v\cdot 
\dfrac{x}{|x|}\right)_- \dfrac M {2\pi|x|}\, . \end{equation} To justify \eqref{eq:gradient estimate}, just write
\[ -S'(r) = \dfrac 1r\int_{\lambda =0}^r \lambda\rho(\lambda) \, d\lambda\leq \dfrac M {2\pi r}\, . \]


Recall that $0<\gamma<1$ is an exponent given by some upper-bound on the initial data. Introduce the auxiliary function 
\begin{equation} 
\label{eq:k}
k(x,v) = k_0 \left| x - \left(\dfrac v{|v|}\cdot x\right)_+ \dfrac v{|v|}  \right|^{-\gamma}= k_0 \left\{\begin{array}{ll} 
 |x|^{-\gamma} \quad& \mbox{if} \quad (v\cdot x)<0  \smallskip \\
 \left| \Pi_{v^\perp}(x)\right|^{-\gamma} \quad& \mbox{if} \quad (v\cdot x)>0  \\
\end{array} \right.
\, ,
\end{equation}
where $\Pi_{v^\perp} = \Id - {v\otimes v}/{|v|^2}$ denotes the orthogonal projection onto $v^\perp$.
We shall prove in fact that, as soon as  $f_0(x,v)\leq k(x,v)$, then it holds true that $f(t,x,v)\leq k(x,v)$ for all time $t>0$. 
This will be achieved through a comparison principle adapted to our context. 

\begin{proposition}[Properties of the auxiliary function $k$]
\label{rem:k}  
\begin{enumerate}[(i)] 
 \item The function $k(x,v)$
belongs to $L^{p}_{\mathrm{loc},x}L^q_v$ 
provided that $p<2/\gamma$ and $q<1/\gamma$.
Moreover it is a $\mathcal C^1$ function of $(x,v)$ in $\left(\RR^2\times V\right)\setminus\left\{(x,v)| x \upuparrows v 
\right\}$.
\item
For all $(x,v)$ we have $k(x,v)\geq k_0 |x|^{-\gamma}$. 
Therefore the initial comparison $f_0(x,v)\leq k(x,v)$ is guaranteed by the assumptions of Theorem \ref{thm:existence}.
\end{enumerate}
\end{proposition}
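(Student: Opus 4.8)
The plan is to verify the two claimed properties of the auxiliary function $k(x,v)$ by direct computation, treating the two regions $\{(v\cdot x)<0\}$ and $\{(v\cdot x)>0\}$ separately and exploiting the explicit form of the projection.

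For part (i), I would first handle the integrability. On the region $(v\cdot x)<0$ we have $k(x,v) = k_0|x|^{-\gamma}$, which is clearly in $L^p_{\mathrm{loc},x}L^q_v$ for every $p$ with $p\gamma<2$ since it does not depend on $v$ and $V$ is bounded. The substantive case is $(v\cdot x)>0$, where $k(x,v) = k_0|\Pi_{v^\perp}(x)|^{-\gamma}$. Fixing $x$ and integrating in $v$ over $V = \mathcal B(0,R)$, I would pass to polar coordinates in $v$, writing $v = \sigma\,\omega(\theta)$ with $\omega(\theta)$ the unit vector at angle $\theta$ relative to $x/|x|$; then $|\Pi_{v^\perp}(x)| = |x|\,|\sin\theta|$, so the $v$-integral of $k^q$ reduces, up to the harmless radial factor $\int_0^R \sigma\,d\sigma$, to $|x|^{-\gamma q}\int |\sin\theta|^{-\gamma q}\,d\theta$, which is finite precisely when $\gamma q<1$. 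This is exactly the constraint $q<1/\gamma$ and, incidentally, it is the same integral that defines $\Omega(\gamma)$ in \eqref{eq:Omega}, which is why $\gamma<1$ is the borderline condition flagged in the introduction. Having bounded $\|k(x,\cdot)\|_{L^q_v}$ by $C|x|^{-\gamma}$, local integrability in $x$ with exponent $p<2/\gamma$ follows since $|x|^{-\gamma}\in L^p_{\mathrm{loc}}(\RR^2)$ iff $\gamma p<2$. For the $\mathcal C^1$ regularity, I would note that away from the collinear set $\{x\upuparrows v\}$ the quantity inside the bracket, namely $x - (v/|v|\cdot x)_+\,v/|v|$, is a smooth function of $(x,v)$ which never vanishes: on $(v\cdot x)<0$ it equals $x\neq 0$ (we may assume $x\neq0$, or note $k$ blows up only at $x=0$), and on $(v\cdot x)>0$ it equals $\Pi_{v^\perp}(x)$, which vanishes only when $x$ is parallel to $v$, i.e. on the excluded set; moreover across the hypersurface $(v\cdot x)=0$ the two formulas match ($(v\cdot x)_+=0$ there) and one checks the derivatives agree, so $k$ is $\mathcal C^1$ there as well. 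Raising a nonvanishing smooth positive function to the power $-\gamma$ preserves $\mathcal C^1$.

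For part (ii), the inequality $k(x,v)\geq k_0|x|^{-\gamma}$ is immediate from the observation that $|\Pi_{v^\perp}(x)|\leq |x|$ always (orthogonal projection is a contraction), hence $|x - (v/|v|\cdot x)_+ v/|v|| \leq |x|$ in both regimes, and $t\mapsto t^{-\gamma}$ is decreasing. Combined with the hypothesis $f_0(x,v)\leq k_0|x|^{-\gamma}$ of Theorem \ref{thm:existence}, this yields $f_0(x,v)\leq k(x,v)$, which is the initialization needed for the comparison argument.

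I do not expect a serious obstacle here; the only delicate point is the bookkeeping of the polar-coordinate change in $v$ and making sure the angular integral $\int_{-\pi/2}^{\pi/2}|\sin\theta|^{-\gamma q}\,d\theta$ is correctly identified as the quantity controlling integrability (and hence that $q<1/\gamma$, not merely $q<2/\gamma$, is genuinely required). The matching of the two branches and their first derivatives across $(v\cdot x)=0$ is routine but should be stated, since it is what makes the piecewise definition globally $\mathcal C^1$ off the collinear set.
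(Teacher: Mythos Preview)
Your proposal is correct and follows essentially the same route as the paper: the paper computes $\int_V k(x,v)^q\,dv = k_0^q\,\tfrac{|V|}{2}\,\Omega(\gamma q)\,|x|^{-\gamma q}$ by the same polar-coordinate change in $v$ (with $|\Pi_{v^\perp}(x)| = |x|\,|\sin\theta|$) and then takes the local $L^p_x$-norm, and it deduces (ii) from the same identity $|\Pi_{v^\perp}(x)| = |x|\,|\sin\theta|\le |x|$. The only difference is that the paper is terser and does not spell out the $\mathcal C^1$ matching across $\{v\cdot x = 0\}$, which you do; your verification there is correct and a welcome addition.
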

\begin{proof} Working exactly as in the proof of \eqref{rho_of_k} below we find that
$\int_V k(x,v)^q dv = k_{0}^{q}\frac{\abs{V}}{2}\Omega(\gamma q) \abs{x}^{-\gamma q}$, where $\Omega(\gamma q)$ 
(see \eqref{eq:Omega})
is finite  thanks to $q<1/\gamma$. Taking the $L^p_x$-norm gives the first assertion.  
 The inequality $k(x,v)\geq k_0 |x|^{-\gamma}$ is obvious when $x\cdot v<0$, and follows from 
 $\abs{\Pi_{v^\perp}(x)}=\abs{x}|\sin\theta|$, where $\theta=\angle(x,v)$, when $x\cdot v<0$.
\end{proof}

The crucial Lemma, which motivates the definition \eqref{eq:k} of $k(x,v)$  is the following one.

\begin{lemma}
\label{lem:sursolution}
Assume \eqref{eq:small mass}.
The function $k(x,v)$ is a supersolution of \eqref{kinmodel}, in the sense that: 
\begin{eqnarray} v\cdot \nabla_x k &=& \left(v\cdot \dfrac{x}{|x|}\right)_- \dfrac \gamma{|x|} k(x,v)\nonumber \\
& \geq & \chi_0 \left(v\cdot \dfrac{x}{|x|}\right)_- \dfrac1{|x|} \dfrac{M}{2\pi} \int_{v'} k(x,v')\, dv'\, . 
\label{eq:sursolution}
\end{eqnarray}
\end{lemma}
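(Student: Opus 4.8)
The plan is to establish the two claims of the lemma separately: first the exact identity for $v\cdot\nabla_x k$, then the inequality relating it to the velocity-average of $k$, which is exactly what makes $k$ a supersolution of \eqref{kinmodel} in light of the gradient bound \eqref{eq:gradient estimate}.

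\emph{Step 1: the identity.} First I would compute $v\cdot\nabla_x k$ directly from the definition \eqref{eq:k}. In the region $v\cdot x<0$ we have $k(x,v)=k_0|x|^{-\gamma}$, so $\nabla_x k = -\gamma k_0 |x|^{-\gamma-2} x$ and $v\cdot\nabla_x k = -\gamma k_0|x|^{-\gamma-2}(v\cdot x) = \gamma|x|^{-1}(v\cdot x/|x|)_- \, k(x,v)$, using $(v\cdot x)<0$ so that $-(v\cdot x)=|x|(v\cdot x/|x|)_-$. In the region $v\cdot x>0$ we have $k(x,v)=k_0|\Pi_{v^\perp}(x)|^{-\gamma}$; since $\Pi_{v^\perp}$ is an orthogonal projection onto a subspace not containing the $x$-dependence through $v$ (here $v$ is a fixed parameter when we differentiate in $x$), we get $\nabla_x |\Pi_{v^\perp}(x)|^2 = 2\Pi_{v^\perp}(x)$ and hence $v\cdot\nabla_x k = -\gamma k_0 |\Pi_{v^\perp}(x)|^{-\gamma-2}\,\bigl(v\cdot\Pi_{v^\perp}(x)\bigr)$. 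The key point is that $v\cdot\Pi_{v^\perp}(x)=\Pi_{v^\perp}(v)\cdot x = 0$, so $v\cdot\nabla_x k=0$ on $v\cdot x>0$, which matches $\gamma|x|^{-1}(v\cdot x/|x|)_-\,k(x,v)=0$ there. So both branches agree with the stated formula, and the first line of \eqref{eq:sursolution} holds (with the singular set $\{x\upuparrows v\}$ excluded, where $k$ fails to be $\mathcal C^1$ per Proposition \ref{rem:k}).

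\emph{Step 2: the inequality.} It remains to show $\gamma k(x,v)\geq \chi_0\frac{M}{2\pi}\int_{v'}k(x,v')\,dv'$, after cancelling the common factor $(v\cdot x/|x|)_-\,|x|^{-1}$ (which is nonnegative). On the left, the worst case — the smallest value of $k(x,v)$ over the relevant $v$'s — is $k_0|x|^{-\gamma}$, attained when $v\cdot x\le 0$; on the region $v\cdot x>0$ we have $k(x,v)\ge k_0|x|^{-\gamma}$ by Proposition \ref{rem:k}(ii). Meanwhile $\int_{v'}k(x,v')\,dv'$ is computed exactly as in Proposition \ref{rem:k} (the computation labelled \eqref{rho_of_k} below): splitting $V$ into $\{v'\cdot x<0\}$ and $\{v'\cdot x>0\}$ and using $|\Pi_{v'^\perp}(x)|=|x||\sin\theta|$ with $\theta=\angle(x,v')$, one gets $\int_V k(x,v')\,dv' = k_0\frac{|V|}{2}\Omega(\gamma)\,|x|^{-\gamma}$. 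Hence the required inequality becomes $\gamma k_0|x|^{-\gamma} \geq \chi_0\frac{M}{2\pi}\cdot k_0\frac{|V|}{2}\Omega(\gamma)|x|^{-\gamma}$, i.e. $\gamma \geq \frac{\chi_0 M|V|\Omega(\gamma)}{4\pi}$, which is precisely the small-mass hypothesis \eqref{eq:small mass}. Finally I would note that combining Step 1, Step 2, and the gradient estimate \eqref{eq:gradient estimate} (which gives $\chi_0(v\cdot\nabla S)_+\rho \le \chi_0(v\cdot x/|x|)_-\frac{M}{2\pi|x|}\rho$, and $\rho$ is bounded using the total mass together with the comparison being propagated) yields that $k$ dominates the gain term, making it a genuine supersolution.

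\emph{Main obstacle.} The computation itself is short; the only delicate point is the exact evaluation of $\int_V k(x,v')\,dv'$ and making sure the angular integral producing $\Omega(\gamma)$ is handled carefully — in particular that the singularity $|\sin\theta|^{-\gamma}$ is integrable exactly when $\gamma<1$, which is where the restriction $0<\gamma<1$ enters and cannot be relaxed. A secondary subtlety is the behaviour on the singular set $\{x\upuparrows v\}$, where $k$ is not differentiable; but this set has measure zero and does not affect the distributional supersolution property, so I would simply remark that the comparison argument of the next step is carried out away from it.
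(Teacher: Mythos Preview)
Your proof is correct and follows essentially the same approach as the paper: compute $v\cdot\nabla_x k$ separately on $\{v\cdot x<0\}$ and $\{v\cdot x>0\}$ (getting $-\gamma k_0(v\cdot x)|x|^{-\gamma-2}$ and $0$ respectively, the latter because $v\cdot\Pi_{v^\perp}(x)=0$), evaluate $\int_V k(x,v')\,dv' = k_0\frac{|V|}{2}\Omega(\gamma)|x|^{-\gamma}$ by the angular integration, and conclude via the small-mass condition \eqref{eq:small mass}. The only cosmetic difference is that the paper computes the velocity integral first and bounds it by $\frac{|V|}{2}\Omega(\gamma)k(x,v)$ directly (using $k(x,v)\geq k_0|x|^{-\gamma}$), whereas you first establish the identity and then reduce to the inequality $\gamma\geq \frac{\chi_0 M|V|\Omega(\gamma)}{4\pi}$; the substance is identical.
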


\begin{proof}
First we evaluate 
\begin{align}
\int_{v'} k(x,v') \, dv' &= k_0|x|^{-\gamma} \int_{\{v'\in V|(x\cdot v')<0\}} \, dv + 
k_0 |x|^{-\gamma} \int_{\{v'\in V|(x\cdot v')>0\}} \left|\dfrac{\Pi_{v'^\perp}(x)}{|x|} \right|^{-\gamma} \, dv'
\nonumber\\
& =  k_0|x|^{-\gamma} \dfrac{|V|}2 + k_0 |x|^{-\gamma} \int_{\nu=0}^R \int_{\theta = -\pi/2}^{\pi/2}  
\left| \sin \theta  \right|^{-\gamma} \, \nu d\nu d\theta \nonumber\\
& =  k_0|x|^{-\gamma} \dfrac{|V|}2 \Omega(\gamma)   \nonumber\\
& \leq  k(x,v) \dfrac{|V|}2 \Omega(\gamma)  \, . \label{rho_of_k}
\end{align}
where we have used part (ii) of Proposition \ref{rem:k}. 

In order to prove \eqref{eq:sursolution}, let us distinguish between $(v\cdot x)>0$ and $(v\cdot x)<0$. In the former case we have
\begin{eqnarray*}
  v\cdot \nabla_x k &=& -\gamma k_0 v\cdot \left(\Pi_{v^\perp}\circ \Pi_{v^\perp}\right)(x) |\Pi_{v^\perp}(x)|^{-\gamma-2} \\
& = & 0\, .
\end{eqnarray*}
because $\Pi_{v^\perp}$ is a linear symmetric operator whose image is orthogonal to $v$.
%
In the latter case $(v\cdot x)<0$ we  have
\begin{eqnarray*} v\cdot \nabla_x k &=& -\gamma  k_0(v\cdot x) |x|^{-\gamma-2} \\
& = & \left(v\cdot \dfrac{x}{|x|}\right)_- \dfrac{\gamma}{|x|} k(x,v) \\
& \geq & \left(v\cdot \dfrac{x}{|x|}\right)_-\dfrac{2\gamma}{|V|\Omega(\gamma)}  \dfrac{1}{|x|}\int_{v'} k(x,v') \, dv'
\, . 
\end{eqnarray*}
and we conclude the proof by using the smallness condition \eqref{eq:small mass} in the assumptions 
of Theorem \ref{thm:existence}. 
\end{proof}

\begin{definition}[Set of admissible exponents]
\label{def:admissible}
A couple of exponents $(p,q)$ is said to be admissible if it satisfies
\[2< p < \frac2\gamma\,, \quad 1< q < \frac1\gamma\, , \quad 0\leq\dfrac1q - \dfrac1p < \dfrac12\,,\quad \dfrac{q'}{p'}> \dfrac{1-\gamma/2}{1-\gamma}\, .\]
This set is nonempty as it can be seen when $(p,q)\to (2^+,1^+)$.
\end{definition}

\begin{lemma}[$L^p_xL^q_v$ regularity is ensured by comparison]
\label{lem:LpLq comparison} Let $(p,q)$ be a set of admissible exponents.
Assume that $f(T,x,v)$ lies below $k(x,v)$. Then $f(T,x-t v,v)$ belongs to $L^p_xL^q_v$ for all $t>0$. 
\end{lemma}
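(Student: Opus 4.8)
The statement to prove is purely a matter of integrability: granted the pointwise bound $f(T,x,v)\le k(x,v)$, show that the free-streamed function $g_t(x,v):=f(T,x-tv,v)$ lies in $L^p_xL^q_v$ for every $t>0$ and every admissible $(p,q)$. Since $f\ge 0$, monotonicity of the norm gives
\[
\norm{g_t}{L^p_xL^q_v}=\norm{f(T,x-tv,v)}{L^p_xL^q_v}\le \norm{k(x-tv,v)}{L^p_xL^q_v}\,,
\]
so everything reduces to estimating the $L^p_xL^q_v$ norm of the \emph{shifted supersolution} $k(x-tv,v)$. The whole difficulty is therefore disentangling the coupling between the $x$-shift (which depends on $v$) and the velocity integration, and this is where the extra admissibility condition $q'/p'>(1-\gamma/2)/(1-\gamma)$ must be the decisive ingredient. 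I expect this step to be the main obstacle: without it the $v$-integral of $k(x-tv,v)^q$, taken after the shift, fails to produce an $x$-profile that is $L^p$ near the shifted singularity set.

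\textbf{Key steps, in order.}
First I would fix $t>0$ and compute the inner velocity norm
$\Phi(x):=\big(\int_V k(x-tv,v)^q\,dv\big)^{1/q}$. Using the explicit form \eqref{eq:k}, the integrand equals $k_0^q|x-tv|^{-\gamma q}$ on $\{(x-tv)\cdot v<0\}$ and $k_0^q|\Pi_{v^\perp}(x-tv)|^{-\gamma q}$ on $\{(x-tv)\cdot v>0\}$; note that on the latter set $\Pi_{v^\perp}(x-tv)=\Pi_{v^\perp}(x)$ since $\Pi_{v^\perp}v=0$, which is the simplification that makes the integral tractable. Second, I would split the $v$-integral into the two regions and pass to polar coordinates $v=\nu\,(\cos\phi,\sin\phi)$, writing $\theta$ for the angle between $x$ and $v$. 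The singularity of the $(x\cdot v)>0$ branch is governed by $|\sin\theta|^{-\gamma q}$, integrable in $\theta$ exactly because $q<1/\gamma$, reproducing the factor $\Omega(\gamma q)$ that already appeared in \eqref{rho_of_k}; this step is what forces $q<1/\gamma$ in the admissibility definition. Third, I would carry out the radial $\nu$-integration over $(0,R)$: since $|x-tv|\ge \big||x|-tR\big|$ and, on the relevant region, the integrand is controlled by a negative power of $|\Pi_{v^\perp}(x)|=|x|\,|\sin\theta|$, the homogeneity in $|x|$ can be tracked to yield $\Phi(x)\le C(t,\gamma,q,R)\,|x|^{-\gamma}$ away from the shifted singular locus, with the constant finite (and $t$-dependent) precisely under $q<1/\gamma$.

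\textbf{Closing the $x$-integration.}
Finally I would integrate $\Phi(x)^p$ over $\RR^2$. The profile $|x|^{-\gamma}$ is $L^p_{\mathrm{loc}}$ near the origin for $p<2/\gamma$ (the condition already recorded in Proposition \ref{rem:k}(i)), which handles the interior singularity; the decay at infinity is supplied by the compact support of $f_0$ in the $x$-variable (inherited through Assumption A\ref{ass:f0} and the hypotheses of Corollary \ref{cor:global existence}), so no integrability problem arises from large $|x|$. The remaining subtlety is the singular set $\{x\upuparrows v\}$ traced out after shifting, and this is exactly where the last admissibility inequality $q'/p'>(1-\gamma/2)/(1-\gamma)$ enters: it guarantees that, after taking the $L^q_v$ norm, the residual blow-up along the shifted ray is mild enough that the outer $L^p_x$ integral converges. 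I would therefore organise the estimate so that the product of the $v$-integrability exponent and the $x$-integrability exponent is balanced against the homogeneity degree $\gamma$, verify that the admissible range makes this product admissible, and conclude $\norm{g_t}{L^p_xL^q_v}<\infty$. I expect the bookkeeping of this exponent balance — rather than any single integral — to be the genuinely delicate part, since it is the condition that the paper singles out as irreducible.
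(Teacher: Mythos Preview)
Your treatment of the inner $L^q_v$ norm is essentially correct and matches the paper: the decomposition according to the sign of $(x-tv)\cdot v$, the observation $\Pi_{v^\perp}(x-tv)=\Pi_{v^\perp}(x)$, and the use of $q<1/\gamma$ to integrate $|\sin\theta|^{-\gamma q}$ all lead to the bound $\|k(x-tv,v)\|_{L^q_v}\le C\,|x|^{-\gamma}$. The paper handles the first branch by the case distinction $|x|\le 2tR$ versus $|x|\ge 2tR$, which is cleaner than your sketch but not essentially different.

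The genuine gap is in closing the outer $L^p_x$ integral at infinity. You write that ``the decay at infinity is supplied by the compact support of $f_0$'', invoking Assumption~A\ref{ass:f0} and Corollary~\ref{cor:global existence}. But Assumption~A\ref{ass:f0} contains no compact support hypothesis, and the lemma is needed for Theorem~\ref{thm:existence}, whose only pointwise hypothesis is $f_0\le k_0|x|^{-\gamma}$. Since $|x|^{-\gamma p}$ is integrable near infinity only when $\gamma p>2$, while admissibility forces $\gamma p<2$, the profile $|x|^{-\gamma}$ is \emph{not} in $L^p(\{|x|\ge 1\})$, and your argument stops here. Bounding $f$ by $k$ alone cannot close the estimate at infinity.

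The paper's fix is to bring in the one piece of information about $f$ that $k$ does not carry: the finite mass $\|f(T,\cdot,\cdot)\|_{L^1_{x,v}}=M$. On $\{|x|\ge 1\}$ one interpolates $L^p_xL^q_v=[L^1_{x,v},L^P_xL^Q_v]_{1-\theta,\theta}$ with auxiliary exponents $P>2/\gamma$, $Q<1/\gamma$, $P'/Q'=p'/q'$; the $L^1$ factor is controlled by $M$, while the $L^P_xL^Q_v$ factor is controlled by $k$ and is finite because now $\gamma P>2$. The last admissibility condition $q'/p'>(1-\gamma/2)/(1-\gamma)$ is \emph{exactly} what guarantees that such a pair $(P,Q)$ exists --- it has nothing to do with the shifted singular ray, contrary to what you suggest.
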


\begin{proof}
We shall first prove that 
\begin{equation}\label{comparison21}
\forall \, x\neq 0\quad \norm{k(x-tv,v)}{L^q_v}\leq C(q,\gamma,k_0,V) \abs{x}^{-\gamma }\, .
\end{equation}
For this purpose, let us decompose
\begin{equation}\label{comparison22}
\int_V k(x-tv,v)^q\, dvk_0^q \int_{\{v\in V|(x-t  v)\cdot v< 0\}} |x - t  v|^{-\gamma q}\, dv 
+ k_0^q \int_{\{v\in V|(x- t  v)\cdot v> 0\}} |\Pi_{v^\perp}(x-t  v)|^{-\gamma q}\, dv\, .
\end{equation}
For the second contribution in the right-hand-side above we use 
$\Pi_{v^\perp}(x-t v)=\Pi_{v^\perp}(x)$ and
$\abs{\Pi_{v^\perp}(x)}=\abs{x}|\sin\theta|$,
where $\theta=\angle(x,v)$, to get
\begin{eqnarray}
 k_0^q \int_{\{v\in V|(x- t  v)\cdot v> 0\}} |\Pi_{v^\perp}(x-t  v)|^{-\gamma q}\,
dv&\leq &
k_0^q \int_{V} |\Pi_{v^\perp}(x)|^{-\gamma q}\,
dv\nonumber\\
&\leq& k_0^q \abs{x}^{-\gamma q} \dfrac{R^2}2 \int_{\theta = -\pi}^\pi \abs{\sin\theta}^{-\gamma q}\, d\theta\nonumber\\
&=& C(q,\gamma,k_0,V)\abs{x}^{-\gamma q}\label{comparison23}\, .
\end{eqnarray}
To estimate the first contribution in the right-hand-side of \eqref{comparison22} we distinguish between two
cases. \\
If $\abs{x}\leq 2 tR$ then 
$V\subset \{v\in \RR^2| \abs{ {x}/{t}-v}\leq 3R\}$, therefore
\begin{eqnarray*}
  k_0^q \int_{\{v\in V|(x-t  v)\cdot v< 0\}} |x - t  v|^{-\gamma q}\, dv&\leq &
 k_0^q t^{-\gamma q}\int_{V} \abs{\frac{x}{t} -   v}^{-\gamma q}\, dv\\
&\leq&  k_0^q t^{-\gamma q}\int_{\{w\in \R^2| \abs{w}\leq 3R\}} \abs{w}^{-\gamma q}\,
dv\\
&\leq & k_0^q  \left(\frac{\abs{x}}{2R} \right)^{-\gamma q}C(q,\gamma, V)\\
&\leq &C(q,\gamma, k_0,V) \abs{x}^{-\gamma q}\, .
\end{eqnarray*}
\noindent
On the other hand, if $\abs{x}\geq 2 t R$ then we use
$\abs{x-tv}\geq \abs{x}-t\abs{v}\geq {\abs{x}}/{2}$ to obtain eventually,
\begin{equation}\label{comparison24}
k_0^q \int_{\{v\in V|(x-t  v)\cdot v< 0\}} |x - t  v|^{-\gamma q}\, dv \leq 
 k_0^q \, 2^{\gamma q} \abs{x}^{-\gamma q}\abs{V}\, .
\end{equation}

In a second step we split the $L_x^pL_v^q$ norm of $f(T,x-tv,v)$ into a short-range part (in space) and a long-range part, as follows
 \begin{equation*}\label{comparison25}
 \norm{f(T,x-tv,v)}{L^p_x L^q_v}\leq 
\norm{f(T,x-tv,v)\mathbbm{1}_{\abs{x}\leq 1}}{L^p_x L^q_v} 
+ \norm{f(T,x-tv,v)\mathbbm{1}_{\abs{x}\geq 1}}{L^p_x L^q_v}\, .
\end{equation*}
For the short-range contribution $\abs{x}\leq 1$ we use \eqref{comparison21} -- which is a combination of \eqref{comparison23} and \eqref{comparison24} -- to get
\begin{equation*}
\norm{f(T,x-tv,v)\mathbbm{1}_{\abs{x}\leq 1}}{L^p_x L^q_v} \leq 
\norm{k(x-tv,v)\mathbbm{1}_{\abs{x}\leq 1}}{L^p_x L^q_v} \leq 
C(q,\gamma, k_0,V)
 \norm{\abs{x}^{-\gamma}\mathbbm{1}_{\abs{x}\leq 1}}{L^p_x }\leq C(p,q,\gamma, k_0,V)\, .
\end{equation*}
because $\gamma p < 2$.\\
For the long-range contribution $\abs{x}\geq 1$ we introduce a pair of auxiliary exponents $(P,Q)$ such that,
\begin{equation*}\label{PQ}
\frac{2}{\gamma}<P<\infty\, ,\quad 1< 
Q<\frac{1}{\gamma}\, , \quad \frac{P'}{Q'}=\frac{p'}{q'}\, .
\end{equation*}
We shall ensure that such a choice of $(P,Q)$ exists: in fact when $Q\to (1/\gamma)^-$ we have 
\[  \dfrac{1-\gamma/2}{1-\gamma} <\dfrac{q'}{p'} = \dfrac{Q'}{P'} \to \dfrac{1}{P'}\dfrac1{1-\gamma}\, . \]
Therefore we can find $P'<(1-\gamma/2)^{-1}$, {\em i.e.} $P>2/\gamma$.\\
Define $\theta={P'}/{p'}={Q'}/{q'}$. We have the interpolation relation $L^p_xL^q_v = [L^1_{x,v},L^{P}_xL^{Q}_v]_{1-\theta,\theta}$, because
\begin{equation*}\label{PQ1}
 \frac{1}{p}=1-\theta + \frac{\theta}{P}, \ \  \frac{1}{q}=1-\theta +
\frac{\theta}{Q}\, .
\end{equation*}
As a consequence,
\begin{eqnarray*}
 \norm{f(T,x-tv,v)\mathbbm{1}_{\abs{x}\geq 1}}{L^p_x L^q_v}&
\leq& \norm{f(T,x-tv,v)\mathbbm{1}_{\abs{x}\geq 1}}{L^1_x L^1_v}^{1-\theta}
\norm{f(T,x-tv,v)\mathbbm{1}_{\abs{x}\geq 1}}{L^P_x L^Q_v}^{\theta}\\
&\leq& M^{1-\theta} \norm{k(x-tv,v)\mathbbm{1}_{\abs{x}\geq 1}}{L^P_x L^Q_v}^{\theta}\\
&\leq& C'(p,q,\gamma,k_0,V,M) \norm{\abs{x}^{-\gamma}\mathbbm{1}_{\abs{x}\geq 1}}{L^P_x
}^{\theta}
\\
&\leq& C(p,q,\gamma,k_0,V,M)\, .
\end{eqnarray*}

\end{proof}

\begin{proof}[Proof of Theorem \ref{thm:existence}]

We obtain from Lemma \ref{lem:sursolution} and the elliptic bound \eqref{eq:gradient estimate} (which holds true in the spherically symmetric framework) the following crucial estimate:
\begin{equation}
v\cdot \nabla_x k \geq \chi_0 \left(v\cdot \dfrac{x}{|x|}\right)_- \dfrac M{2\pi|x|}  \int_{v'} k(x,v')\, dv' \geq \chi_0 (v\cdot \nabla S)_+ \int_{v'} k(x,v')\, dv'\, .
\end{equation}
Therefore we get the following differential inequality, well-suited for proving a sort of comparison principle:
\begin{equation}
\label{eq:comparison inequality}
\partial_{t} (f-k) + v\cdot\nabla_{x} (f-k) \leq  \chi_0(v\cdot \nabla S)_+ \left(\rho(x) - \int_{v'} k(x,v')\, dv'\right)\, . 
\end{equation} 
However the non-local nature of the right-hand side requires more regularity, and a local in time estimate 
(obtained in the Appendix) will enter into the game.
Due to the lack of integrability at infinity of the reference function $k$, we aim to localize in space
 such a partial differential inequality. 
In order to do so, multiply \eqref{eq:comparison inequality} by the test function $\varphi(x) = \exp\left(-\sqrt{1+|x|^2}\right)$ 
which satisfies: 
\begin{equation}
 \label{eq:phi}
\forall\, x\quad \left|\nabla \varphi(x) \right| = \left| \dfrac x{\sqrt{1+|x|^2}} \right| \varphi(x) \leq \varphi(x)\, .  
\end{equation}
Introduce the notation:
\[K(x) = \int_{v'\in V} k(x,v')\, dv'\, .\]
We get the following localized partial differential inequality,
\begin{equation}
\label{eq:comparison inequality bis} 
\partial_{t} \left((f-k)\varphi\right) + v\cdot\nabla_{x} \left((f-k)\varphi\right) \leq  
\chi_0(v\cdot \nabla S)_+ \left(\rho  - K \right) \varphi  +  (f-k)v\cdot  \nabla \varphi \, . 
\end{equation}

Introduce $\bP_\eps$ a sub-approximation of the positive part $(\cdot)_+$. Namely we choose $\bP_\eps(z) = \eps \bP_1(z/\eps)$, 
where 
\[ \bP_1(z) = \left\{\begin{array}{l@{\quad\mbox{if}\quad}l} 0 & z\leq 0 \\
z^2/2 & 0\leq z \leq 1 \\
z - 1/2 & z\geq 1 \end{array}\right. \, .\]
In particular, $\bP_\eps(z)$ is identically zero on $\{z\leq 0\}$, $0\leq \bP'_\eps(z)\leq 1$ and moreover
\begin{equation} \label{eq:P approximation} \forall\, z\quad |z| \bP'_\eps(z)\leq 2 \bP_\eps(z)  \, .\end{equation} 
Multiplying \eqref{eq:comparison inequality bis} by $\bP'_\eps((f-k)\varphi)$ we obtain
\begin{eqnarray*}
\partial_{t} \bP_\eps((f-k)\varphi) + v\cdot\nabla_{x} \bP_\eps((f-k)\varphi) &\leq&  
\chi_0(v\cdot \nabla S)_+ \left(\rho  - K \right)\varphi \bP_\eps'((f- k)\varphi) +   (f-k) \left(v\cdot\nabla \varphi\right) 
\bP_\eps'((f- k)\varphi) \\
&\leq&  \chi_0 (v\cdot \nabla S)_+ \left(\rho  - K \right)_+\varphi + 2 R \bP_\eps((f-k)\varphi)
\, ,
\end{eqnarray*} 
due to \eqref{eq:phi} and \eqref{eq:P approximation}.
Therefore we obtain, thanks to Duhamel's representation of the solution after a given time $T$,
\begin{equation*}
\begin{split}
&\bP_\eps((f-k)\varphi)(T+t ,x,v) \leq e^{2Rt } \bP_\eps((f-k)\varphi)(T,x-t  v,v)\\ 
&\qquad+ e^{2Rt }\chi_0 R \int_{s = 0}^t  e^{-2Rs} \left(|\nabla S| \left(\rho-K\right)_+ \varphi\right)(T+s,x-(t -s)v) \, ds\, . \\
\end{split}
\end{equation*}
As $\eps\to 0$ we obtain the following integral inequality,
\begin{equation*}
\begin{split}
&(f\varphi-k\varphi)_+(T+t ,x,v) \leq e^{2Rt }(f\varphi-k\varphi)_+(T,x-t  v,v)\\ 
&\qquad+ e^{2Rt }\chi_0 R \int_{s = 0}^t  e^{-2Rs} \left(|\nabla S| \left(\rho\varphi-K\varphi\right)_+\right)(T+s,x-(t -s)v) \, ds \, .
\end{split}
\end{equation*}

Next compute similarly as in the Appendix (by the help of the dispersion estimate Lemma \ref{lem:dispersion}) for admissible exponents $(p,q)$ (Definition \ref{def:admissible}),
\begin{eqnarray*}
\|(f\varphi-k\varphi)_+(T+t ,x,v)\|_{L^p_xL^q_v}& \leq& e^{2Rt } \|(f\varphi-k\varphi)_+(T,x-t  v,v)\|_{L^p_xL^q_v}  \\
&& \ \  + \chi_0 C(V) e^{2Rt } \int_{s=0}^t  e^{-2Rs} \dfrac1{(t -s)^{2(1/q-1/p)}}   \||\nabla S| (\rho\varphi-K\varphi)_+(T+s)\|_{L^q} ds \\
& \leq& e^{2Rt } \|(f\varphi-k\varphi)_+(T,x-t  v,v)\|_{L^p_xL^q_v}  \\
&& \ \  + \chi_0 C(V) e^{2Rt } \int_{s=0}^t  e^{-2Rs} \dfrac1{(t -s)^{2(1/q-1/p)}}   \|\nabla S(T+s)\|_{L^{p^\star}} \|(\rho\varphi-K\varphi)_+(T+s)\|_{L^p} ds \\
&  \leq & e^{2Rt } \|(f\varphi-k\varphi)_+(T,x-t  v,v))\|_{L^p_xL^q_v}  \\
&&\  \  + \chi_0 C(V) e^{2Rt } \int_{s=0}^t  e^{-2Rs}\dfrac1{(t -s)^{2(1/q-1/p)}}  \| \rho(T+s)\|_{L^{r}} 
\|(\rho\varphi-K\varphi)_+(T+s)\|_{L^p} ds\, ,
\end{eqnarray*}
where the H\"older exponents are given by $1/p^\star + 1/p = 1/q$ and
the Sobolev exponent ($r<2$) is given by $1/r = 1/2 + 1/p^\star = 1/2 + 1/q - 1/p < 1$.

Furthermore, because the positive part $(\cdot)_+$ is a convex function, and $V$ is a bounded set, we have 
by Jensen's inequality,
\begin{align*}& (\rho\varphi  - K\varphi)_+(T+s,x)  = \left(\int_{v\in V} |V| (f\varphi-k\varphi)(T+s,x,v) 
\,\dfrac{ dv}{|V|}\right)_+ \leq \int_{v\in V} \left(|V|(f\varphi-k\varphi)\right)_+(T+s,x,v) \dfrac{ 
dv}{|V|}\, , \\
& \|(\rho\varphi-K\varphi)_+(T+s)\|_{L^p} \leq  \|(f\varphi-k\varphi)_+(T+s)\|_{L^p_xL^1_v}\leq 
C(q,V)\|(f\varphi-k\varphi)_+(T+s)\|_{L^p_xL^q_v}\, .
\end{align*}

As soon as $f(T,x,v)$ remains below $k(x,v)$, Lemma \ref{lem:LpLq comparison} guarantees that the free 
transport contribution $f(T,x-tv,v)$ belongs to $L^p_xL^q_v$ for admissible exponents $(p,q)$. As a 
consequence of the local in time existence result of Proposition \ref{ref:local existence bis} (in the 
Appendix), we can ensure that $f(T+t,x,v)$ belongs to $L^p_xL^q_v$ for small time $t>0$. Thus $\rho(T+t,x)$ 
belongs to $L^r$ for any $r<2$ in particular.

Therefore the Gronwall lemma guarantees that $\|(f\varphi - k\varphi)_+\|_{L^p_xL^q_v}$, if it is zero up 
to some time $T$, it remains zero for small later times $T+t $.

We have proven a comparison principle which prevents solutions to  \eqref{kinmodel} from blowing-up.
\end{proof}

\thanks{{\em Acknowledgements.} The authors are grateful to Beno\^it Perthame for having addressed this 
challenging issue, and for fruitful comments on this work. VC thanks Thibaut Allemand for stimulating 
discussions. NB would like to thank the Laboratoire Jacques-Louis Lions of the Universit\'e Pierre et Marie Curie
and the D\'epartement de Math\'ematiques et Applications of the \'Ecole Normale Sup\'erieure for their 
hospitality and financial support during his sabbatical leave in the spring semester of 2008. }

\newpage

\section*{Appendix A: Solutions having spherical symmetry}

The notion of spherical symmetry in kinetic theory is contained in the following Definition \ref{def:spherical sym}. Recall that the set of admissible velocities is the ball $V = \mathcal B(0,R)$.
\begin{definition} \label{def:spherical sym}
A function $f(x,v)$ defined for $(x,v)\in\R^2\times V$ is spherically symmetric  if for 
every rotation $\Theta$ of $\R^2$ we have $f(\Theta x, \Theta v) = f(x,v)$.
\end{definition}

If $f(x,v)$ is spherically symmetric then the space density $\rho(x)=\int_{v\in V} f(x,v) dv$
is spherically symmetric in the usual sense, i.e $\rho(\Theta x)=\rho(x)$ for all rotations $\Theta$. Therefore $\rho$ depends only on $r=|x|$. Abusing notations we write both $\rho(x)$ and $\rho(r)$
but the meaning will always be clear from the context. 

If $f(x,v)$ is spherically symmetric then its current $j(x)=\int_{V} v f(x,v) dv$ does not necessarily point
in the direction of $x$. For example, if 
$ f(x,v)=x\cdot v^{\perp}=x_2 v_1 - x_1 v_2\, ,
$ then, 
\begin{equation*}
 j(x)=-\frac{\pi R^4}{4}(-x_2 , x_1)\, \perp\, x\, .
\end{equation*}
However, if we decompose
\begin{equation}\label{j1}
 j(x)= j^{\parallel}(x)\frac{x}{|x|} + 
j^\perp(x)\frac{x^{\perp}}{|x|}\, ,
\end{equation}
then we have for every rotation $\Theta$,
\begin{equation*}\label{j2}
 j(\Theta x) = \int_{ V} vf(\Theta x,v)  \, dv = \int_{  V} (\Theta w)f(\Theta x, \Theta w) \,  d(\Theta w)\Theta j(x) \, .
\end{equation*}
Therefore the decomposition's coefficients in \eqref{j1} are both spherically symmetric:
\begin{equation*}\label{j3}
j^\parallel(\Theta x) = j^\parallel(x)\, , \quad \mbox{and} \quad
j^\perp(\Theta x) = j^\perp(x)\, .
\end{equation*}
Abusing notation we write
\begin{equation*}\label{j4}
 j(x)= j^{\parallel}(r)\frac{x}{r} + j^{\perp}(r) \frac{x^{\perp}}{r}\, .
\end{equation*}
We can then simply derive the following identities which will be crucially used in Section \ref{sec:BU}:
\begin{align}
&x \cdot j(x)= r j^{\parallel}(r)\label{j5}\, ,\\
&\nabla \cdot j(x) = \frac{1}{r}\left(r j^{\parallel}(r) \right)'\, .\label{j6}
\end{align}

\subsection*{The kinetic system \eqref{kinmodel} preserves spherical symmetry}

If we start with   spherically symmetric initial data $f_0(x,v)$ then Proposition 
\ref{localexthm} guarantees 
the existence of a local in time solution $f(t,x,v)$. It is not difficult to
verify that for any rotation $\Theta$ the function $f(t,\Theta x, \Theta v)$ is also a solution to 
\eqref{kinmodel} (see below). Therefore, by the uniqueness part of Proposition 
\ref{localexthm} they have to coincide. It follows that
$f(t,x,v)$ is spherically symmetric throughout the time interval of existence. 

Let $(f,S)$ be a solution and $\Theta $ be a rotation of $\R^2$. Define $(g,Q)$ by 
\begin{equation*}\label{r1}
 g(t,x,v)=f(t,\Theta x, \Theta v)\, ,\quad  Q(t,x)=S(t,\Theta x)\, .
\end{equation*}
On the one hand,
\begin{eqnarray*}
\partial_t g(t,x,v) + v \cdot \nablax g (t,x,v) & = & 
\partial_t f (t,\Theta x, \Theta v) + v \cdot \Theta^{T}(\nablax f)(t, \Theta x, \Theta v) \\
&=&  \partial_t f (t,\Theta x, \Theta v) + \Theta v \cdot (\nablax f)(t, \Theta x, \Theta v)\, .
\end{eqnarray*}
On the other hand,
\begin{align*}
&\int_{V}T[S](t, \Theta x, \Theta v, v') f(t, \Theta x, v')\, dv' 
- \int_{V}T[S](t, \Theta x, v', \Theta v) f(t, \Theta x, \Theta v)\, dv' \\
&\qquad =\int_{V}\left(\Theta v \cdot (\nabla S)(t,\Theta x) \right)_{+} f(t, \Theta x, v')\, dv' 
- \int_{V}\left( v' \cdot (\nabla S)(t,\Theta x) \right)_{+} f(t, \Theta x, \Theta v)\, dv' \\
&\qquad =\int_{V}\left( v \cdot \Theta^{T}(\nabla S)(t,\Theta x) \right)_{+} f(t, \Theta x, \Theta w)\, dw 
- \int_{V}\left( w \cdot \Theta^{T}(\nabla S)(t,\Theta x) \right)_{+} f(t, \Theta x, \Theta v)\, dw\\
&\qquad=\int_{V}\left( v \cdot (\nabla Q)(t, x) \right)_{+} g(t,  x,  w)\, dw 
- \int_{V}\left( w \cdot (\nabla Q)(t,\Theta x) \right)_{+} g(t,  x,  v)\, dw\\
&\qquad=\int_{V}T[Q](t,  x,  v, w) g(t,  x, w)\, dw
 - \int_{V}T[Q](t, \Theta x, w,  v) g(t,  x,  v)\, dw .
\end{align*}
Also
\begin{equation*}
 -\Delta Q(x) + \alpha Q(x) =- (\Delta S) (\Theta x) + \alpha S(\theta x) 
=\int_{V} f(\Theta x, v) dv  =\int_{V} f(\Theta x, \Theta w ) dw 
=\int_{V} g( x,  w ) dw\,  .
\end{equation*}


\section*{Appendix B: Existence and uniqueness in a weaker framework}

The goal of this appendix is to provide a variant of Proposition \ref{localexthm} in a framework well-suited for proving the global existence result of Section \ref{sec:existence}. As a matter of fact, the reference function 
$k(x,v)$
used there  belongs to $L^p_{\mathrm{loc},x}L^q_v$ for any $1\leq p < 2/\gamma$ and $1\leq q < 1/\gamma$. On the other hand,  Proposition \ref{localexthm} deals with solutions lying in $L_x^pL_v^p$, $p>2$, thus it does not cover properly the case $\gamma>1/2$. 

\begin{assumption}[Initial datum, precised version]
\label{ass:f0 bis}
Assume that the initial density $f_0(x,v)\geq 0$ belongs to $L^1_{x,v}$ and satisfies the estimate $\|f_0(x-t v,v)\|_{L^p_xL^q_v}\leq \bC_0$ for small times $t\geq 0$, and for some couple of exponents $(p,q)$ verifying: 
\begin{equation}
\label{eq:precise exponents}
2< p \,, \quad 1< q  \, , \quad 0\leq\dfrac1q - \dfrac1p < \dfrac12\, .
\end{equation}
\end{assumption}

Observe that Assumption A\ref{ass:f0 bis} is satisfied as soon as $f_0\in L^p_{x,v}$ for some $p>2$. Thus it is weaker than Assumption A\ref{ass:f0}, except for the condition of spherical symmetry.

\begin{proposition}
\label{ref:local existence bis}
Assume that the initial density $f_0$ verifies Assumption A\ref{ass:f0 bis} for some couple of exponents $(p,q)$ such that \eqref{eq:precise exponents} holds true. Then there exists a unique (local in time) solution to system \eqref{kinmodel} with $f(t,x,v)\in L^p_xL^q_v$.
\end{proposition}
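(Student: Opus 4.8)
The plan is to run a Banach fixed-point argument on the Duhamel formulation of \eqref{kinmodel}, in the spirit of the proof of Proposition \ref{localexthm}, but with the dispersion estimate of Lemma \ref{lem:dispersion} replacing the (now false, since $p\neq q$) invariance of the mixed norm $L^p_xL^q_v$ under free transport. The one genuinely new difficulty compared with the $L^p_{x,v}$ setting is the loss term $-\chi_0\omega|\nabla S|f$: being a pure multiplication operator with no velocity averaging, it cannot be absorbed by the dispersive smoothing of the transport semigroup in a mixed space with $p\neq q$. I would keep it inside an integrating factor, so that it only ever contributes through a factor in $(0,1]$ (for the a priori bound) or through the Lipschitz inequality $|e^{-a}-e^{-b}|\le|a-b|$ (for the contraction).

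Concretely, along the characteristics $s\mapsto x-(t-s)v$ a solution satisfies
\begin{equation*}
f(t,x,v)=\mathcal E(0,t)\,f_0(x-tv,v)+\chi_0\int_0^t\mathcal E(s,t)\,\big(v\cdot\nabla S\big)_+\,\rho\,\big(s,x-(t-s)v\big)\,ds\, ,
\end{equation*}
with $\mathcal E(s,t)=\mathcal E(s,t;x,v)=\exp\!\Big(-\chi_0\omega\int_s^t|\nabla S(\tau,x-(t-\tau)v)|\,d\tau\Big)\in(0,1]$ and $S$ determined from $\rho=\int_V f\,dv$ by the elliptic equation. I would set up the map $\Phi$ given by the right-hand side above on the complete metric space with norm $\norm{g}{X_T}=\sup_{0\le t\le T}\big(\norm{g(t)}{L^1_{x,v}}+\norm{g(t)}{L^p_xL^q_v}\big)$, restricted to the closed ball of radius $2(\bC_0+\norm{f_0}{L^1_{x,v}})$, and show it is a self-map and a contraction for $T$ small. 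The tools are: the elliptic bound $\norm{\nabla S}{L^\infty}\le C(p)\norm{\rho}{L^1}^{1-p'/2}\norm{\rho}{L^p}^{p'/2}$ of Lemma \ref{lem:elliptic} (legitimate because $p>2$); Hölder's inequality in $v$ (using $\abs{V}<\infty$ and $q\le p$, which is exactly the condition $1/q-1/p\ge0$), giving $\norm{\rho(s)}{L^p_x}\le C(V)\norm{f(s)}{L^p_xL^q_v}$ and $\norm{\rho(s)}{L^1_x}\le\norm{f(s)}{L^1_{x,v}}$; the interpolation $\norm{\rho(s)}{L^q_x}\le\norm{\rho(s)}{L^1_x}^{1-p'/q'}\norm{\rho(s)}{L^p_x}^{p'/q'}$; and Lemma \ref{lem:dispersion}.

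For the self-map bound, $0\le\mathcal E\le 1$ discards the loss term, and the source $\big(v\cdot\nabla S(s,\cdot)\big)_+\rho(s,\cdot)$, transported over the lag $t-s$, is the free evolution of $h_s(y,v)=\big(v\cdot\nabla S(s,y)\big)_+\rho(s,y)$; since $\abs{h_s(y,v)}\le R\norm{\nabla S(s)}{L^\infty}\rho(s,y)$ one has $h_s\in L^q_xL^p_v$ with $\norm{h_s}{L^q_xL^p_v}\le R\abs{V}^{1/p}\norm{\nabla S(s)}{L^\infty}\norm{\rho(s)}{L^q_x}$, so Lemma \ref{lem:dispersion} yields
\begin{equation*}
\big\|\big(v\cdot\nabla S\big)_+\rho\,\big(s,x-(t-s)v\big)\big\|_{L^p_xL^q_v}\le\frac{C(V)}{(t-s)^{2(1/q-1/p)}}\,\norm{\nabla S(s)}{L^\infty}\,\norm{\rho(s)}{L^q_x}\, .
\end{equation*}
Crucially, in both $\norm{\nabla S}{L^\infty}\lesssim\norm{\rho}{L^1}^{1-p'/2}\norm{\rho}{L^p}^{p'/2}$ and $\norm{\rho}{L^q}\lesssim\norm{\rho}{L^1}^{1-p'/q'}\norm{\rho}{L^p}^{p'/q'}$ the exponents add up to $1$, so via $a^{\theta}b^{1-\theta}\le a+b$ each factor is bounded \emph{linearly} by $\norm{f(s)}{X_T}$; hence the source is quadratic in $\norm{f(s)}{X_T}$, and since $2(1/q-1/p)<1$ the singular kernel is integrable and gives a prefactor $T^{\,1-2(1/q-1/p)}\to0$. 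Together with $\norm{f_0(x-tv,v)}{L^p_xL^q_v}\le\bC_0$ (Assumption A\ref{ass:f0 bis}) and the simpler, dispersion-free control of $\norm{\Phi(f)(t)}{L^1_{x,v}}$ (integrate in $v\in V$, use $\abs{\nabla S}\le\norm{\nabla S}{L^\infty}$ and $\norm{\rho}{L^1}$), this keeps $\Phi$ inside the ball once $T$ is small enough.

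The contraction estimate is the same computation on differences: the $\mathcal E$-differences are controlled through $|e^{-a}-e^{-b}|\le|a-b|\le\chi_0\omega\int_s^t\norm{\nabla S_1(\tau)-\nabla S_2(\tau)}{L^\infty}\,d\tau$, and the gradient and density differences are Lipschitz in $\norm{\cdot}{X_T}$ precisely because in $\norm{\nabla S_1-\nabla S_2}{L^\infty}\le C\norm{\rho_1-\rho_2}{L^1}^{1-p'/2}\norm{\rho_1-\rho_2}{L^p}^{p'/2}$ the exponents sum to $1$ (exactly as in the proof of Proposition \ref{localexthm}), while all remaining factors carry a positive power of $T$. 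Banach's theorem then delivers a unique fixed point in $X_T$, which is the desired local solution with $f(t)\in L^p_xL^q_v$, and uniqueness within the whole class follows from the same difference estimate. I expect the only nonroutine point to be the integrating-factor reformulation that neutralises the non-dispersive loss term in the mixed space $L^p_xL^q_v$; everything else is bookkeeping with dispersion, Hölder and interpolation, and should be checked carefully but presents no conceptual obstacle.
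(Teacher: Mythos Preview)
Your proposal is correct and follows essentially the same route as the paper: both keep the loss term inside an integrating factor (the paper phrases this as a preliminary Duhamel observation with nonnegative damping $\lambda$, yielding the same bound $\mathcal E\le 1$), then close via the dispersion Lemma \ref{lem:dispersion} on the gain term together with the elliptic estimate and interpolation, in the norm $\sup_t(\|\cdot\|_{L^1_{x,v}}+\|\cdot\|_{L^p_xL^q_v})$. The only organisational difference is that for the contraction the paper writes a PDE for $f_1-f_2$ with $\chi_0\omega|\nabla S_2|(f_1-f_2)$ on the left (so the awkward term is $(|\nabla S_1|-|\nabla S_2|)f_1$, requiring a separate bound on $\|f_1(s,x-(t-s)v,v)\|_{L^p_xL^q_v}$), whereas you differentiate the Duhamel formula directly and handle the $\mathcal E_1-\mathcal E_2$ contributions via $|e^{-a}-e^{-b}|\le|a-b|$; both lead to the same closure.
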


Before the proof of this Proposition, let start with a general feature of kinetic transport equations with source and decay terms.
The solution of $\partial_t h + v\cdot \nablax h + \lambda(t,x,v) h = g$
with vanishing initial data is given by the Duhamel's representation,
\[
h(t,x,v)=\int_{s=0}^{t} g(s,x-(t-s)v,v) \exp\left\{-\int_{\tau = s}^{t} \lambda(\tau,x-(t-\tau)v,v)\,  d\tau\right\}\,  ds\,  .
\]
In case  $\lambda$ is a nonnegative function, we obtain, 
\begin{eqnarray*}
\abs{h(t,x,v)} &\leq &\int_{s=0}^{t} \abs{g(s,x-(t-s)v,v)} 
\exp\left\{-\int_{\tau=s}^{t} \lambda(\tau,x-(t-\tau)v,v)\, d\tau\right\}\, ds\,\\
& \leq &  \int_{s=0}^{t} \abs{g(s,x-(t-s)v,v)}\,  ds\, .
\end{eqnarray*}

\begin{proof}
We aim to write directly a fixed-point argument under 
the reference norm,
\begin{equation}\label{norm bis}
  \norm{g}{Y}  =\sup_{0\leq t \leq T} \left(\norm{g(t,x,v)}{L^{1}_{x,v}} + 
 \norm{g(t,x,v)}{L^{p}_{x}L^q_v}\right)\, .
\end{equation}
We start from 
\begin{multline}\label{newappendixB:1}
 \partial_t (f_1-f_2) + v\cdot \nabla_x (f_1-f_2) + \chi_0\omega \abs{\nabla S_2}\left( f_1 - f_2\right)=\\
\chi_0 \left(\left(v \cdot \nabla S_1\right)_{+} - \left(v \cdot \nabla S_2\right)_{+}\right)\rho_1 
 + \chi_0 \left(v \cdot \nabla S_2\right)_{+} \left( \rho_1 - \rho_2\right)
   - \chi_0\omega\left(\abs{\nabla S_1} - \abs{\nabla S_2} \right)f_1\, . 
\end{multline}
Applying the preliminary observation to equation
\eqref{newappendixB:1} we obtain
\begin{eqnarray*}
|f_1-f_2|(t) & \leq &  C  \int_{s = 0}^t  
 \Big( \left\|\nabla S_1 - \nabla S_2\right\|_\infty  \abs{\rho_1 (s,x-(t-s)v)}
 + \|\nabla S_2\|_\infty \left| \rho_1 - \rho_2\right|(s,x-(t-s)v) \Big)\, ds \\
 &&\qquad  + C  \int_{s = 0}^t  
   \left\| \nabla S_1  -  \nabla S_2 \right\|_\infty \abs{f_1(s,x-(t-s)v,v)}\, ds\, ,   
\end{eqnarray*}
Therefore we are able to develop a dispersion technique as usual,
\begin{eqnarray*}
\|(f_1-f_2)(t)\|_{L_x^pL_v^q} & \leq &  C  \int_{s = 0}^t  (t-s)^{-2(1/q-1/p)} 
 \Big( \left\|(\nabla S_1 - \nabla S_2)(s)\right\|_\infty \|\rho_1 (s)\|_{L^q}
 + \|\nabla S_2\|_\infty \| (\rho_1 - \rho_2)(s)\|_{L^q} \Big)\, ds \\
 &&\qquad  + C  \int_{s = 0}^t  
   \left\| (\nabla S_1  -  \nabla S_2)(s) \right\|_\infty \| f_1(s,x-(t-s)v,v)\|_{L^pL^q}\, ds
\\
& \leq &  C  \left( \|f_1\|_Y + \|f_2\|_Y \right) 
\left(  \int_{s = 0}^t   (t-s)^{-2(1/q-1/p)} \, ds\right) \| f_1-f_2\|_Y\\
 &&\qquad  +  C \left(  \int_{s = 0}^t  
   \| f_1(s,x-(t-s)v,v)\|_{L^pL^q}\, ds\right)\| f_1-f_2\|_Y \, ,   
\end{eqnarray*}
where we have used Lemma \ref{lem:elliptic}.
In parallel, we get a bound for $\| f_1(s,x-(t-s)v,v)\|_{L^p_xL^q_v}$. We argue as follows: $f_1$ solves
\[
\partial_t f_1 + v\cdot \nabla_x f_1 + \chi_0 \omega \abs{\nabla S_1} f_1 = \chi_0 \left(v \cdot \nabla S_1\right)_{+}  \rho_1\, ,
\]
and since the coefficient $\chi_0 \omega \abs{\nabla S_1}$ is non-negative, we can argue as above to get
\[
\abs{f_1(s,x,v)}\leq \abs{f_1(0,x-sv,v)} + C \int_{\tau = 0}^s \|\nabla S_1(\tau)\|_\infty 
\abs{\rho_1(\tau,x-(s-\tau)v,v)}\, d\tau\, ,
\]
and eventually
\begin{align*}
 \abs{f_1(s,x-(t-s)v,v)}  & \leq   \abs{f_1(0,x-tv,v)} + C \int_{\tau = 0}^s \|\nabla S_1(\tau)\|_\infty 
\abs{\rho_1(\tau,x-(t-\tau)v,v)}\, d\tau\, ,\\
 \|f_1(s,x -(t-s) v,v)\|_{L^p_xL^p_q}  & \leq  \|f_0(x-tv,v)\|_{L^p_xL^q_v} + C \int_{\tau=0}^s (t-\tau)^{-2(1/q-1/p)} \|\nabla S_1(\tau)\|_{L^\infty}\|\rho_1(\tau)\|_{L^q}\, d\tau \\
&  \leq  \bC_0 + C \|f_1\|_Y^2 \int_{\tau=0}^t (t-\tau)^{-2(1/q-1/p)} \, d\tau
\,  ,
\end{align*}
from which we deduce that the flow is contractant for small time w.r.t. the reference norm $\|\cdot\|_Y$, and relatively to the initial datum.

\end{proof}

\newpage

\end{document}